\newcommand{\calO}{\mathcal{O}}
\newcommand{\calP}{\mathcal{P}}
\newcommand{\frakS}{\mathfrak{S}}
\newcommand{\frakG}{\mathfrak{G}}
\newcommand{\Sym}{\mathrm{Sym}}
\newcommand{\bbP}{\mathbb{P}}
\newcommand{\bbZ}{\mathbb{Z}}
\newcommand{\bbQ}{\mathbb{Q}}
\newcommand{\Pic}{\operatorname{Pic}}
\newtheorem{theorem}{Theorem}[section]
 \newtheorem{proposition}{Proposition}[section]
 \let\c@proposition\c@theorem
 \newtheorem{lemma}{Lemma}[section]
 \let\c@lemma\c@theorem
 \newtheorem{corollary}{Corollary}[section]
 \let\c@corollary\c@theorem
 \let\c@con\c@theorem
 \let\c@question\c@theorem
\theoremstyle{definition}
 \newtheorem{example}{Example}[section]
 \let\c@example\c@theorem
 \newtheorem{defn}{Definition}[section]
 \let\c@defn\c@theorem
 \newtheorem{remark}{Remark}[section]
 \let\c@remark\c@theorem
 \newtheorem{observation}{Observation}[section]
 \let\c@observation\c@theorem
 \newtheorem{convention}{Convention}[section]
 \let\c@convention\c@theorem
\let\c@equation\c@theorem
\def\sectionautorefname~{\S}
\newcommand{\beq}{\begin{equation}}
\newcommand{\eeq}{\end{equation}}
\newcommand{\la}{\langle}
\newcommand{\ra}{\rangle}
\def\Z{\mathbb{Z}}
\def\Q{\mathbb{Q}}
\def\C{\mathbb{C}}
\def\dual{^\vee}
\def\rk{\operatorname{rk}}
\def\Aut{\operatorname{Aut}}
\def\discr{\operatorname{discr}}
\def\rad{\operatorname{Ker}}
\def\Cp#1{\mathbb{P}^{#1}}
\def\pencil{\mathcal{P}}
\def\Hom{\operatorname{Hom}}
\def\id{\operatorname{id}}
\def\Sym{\operatorname{Sym}}
\def\rad{\operatorname{rad}}
\def\SG#1{\mathfrak{S}_{#1}}
\def\DG{\mathfrak{D}}
\def\2{\color{red}}
\def\3{\color{magenta}}
\gdef\tconfig{\vcenter\bgroup\offinterlineskip
\catcode`\1\active\catcode`\.\active\def1{&\bullet}\def.{&\cdot}%
\halign\bgroup
 \hbox to0pt{\hss##\hss}&&\hbox to9pt{\small\hss$\mathstrut##$\hss}\cr}
\gdef\endtconfig{\crcr\egroup\egroup}
\def\tA{\smash{\tilde{A}}}
\def\tD{\smash{\tilde{D}}}
\def\tE{\smash{\tilde{E}}}
\def\qa{\alpha}
\def\qb{\beta}
\def\bS{\bar S}
\def\bX{\bar X}
\def\bG{\bar G}
\def\bT{\bar T}
\def\bGamma{\bar\Gamma}
\def\lineno{\afterassignment\linenoii\count0=}
\def\linenoii{\let\next=L\ifnum\count0>12\let\next=M\advance\count0-12\fi
 \next_{\the\count0}}
\def\DEFS{\let\*\times\def\''{^*}\let\l\lineno\let\c\mathbf}%
\def\ii{\sqrt{-1}}
\def\quadric{\mathcal{Q}}
\def\plane{R}
\def\cone{\mathcal{I}}
\def\cubic{\mathcal{C}}
\title{K3 surfaces of degree six arising from desmic tetrahedra}
\author{Alex Degtyarev}
\address{%
Bilkent University\\
Department of Mathematics\\
06800 Ankara, Turkey}
\email{degt@fen.bilkent.edu.tr}
\thanks{%
The first author was partially supported by the
T\''{U}B\.{I}TAK grant
123F111.}
\author{Igor Dolgachev}
\address{Department of Mathematics, University of Michigan, 525 East University Avenue, Ann Arbor, MI 48109-1109 USA}
\email{idolga@umich.edu}
\author{Shigeyuki Kond\=o}
\address{Graduate School of Mathematics, Nagoya University, Nagoya, 464-8602 Japan}
\email{kondo@math.nagoya-u.ac.jp}
\thanks{Research of the third author is partially supported by JSPS Grant-in-Aid for Scientific Research (A) No.20H00112, (B) No.25K00906.}
\keywords{K3 surfaces, desmic tetrahedra, cubic line complex}
\subjclass[2010]{14J28}
\begin{document}

\begin{abstract}
We study K3 surfaces of degree 6 containing two sets of 12 skew lines such that each line from
a set intersects exactly six lines from
the other set.
These surfaces arise as hyperplane sections of the cubic line complex associated with the
pencil of desmic quartic surfaces introduced by George Humbert and recently studied by the second
and
third authors. We
discuss alternative birational
models of the surfaces, compute the Picard lattice
and a group of projective automorphisms,
and describe rational curves of low degree on the general surface.
\end{abstract}

\maketitle

\section{Introduction}\label{into}
Three tetrahedra in projective space $\bbP^3$ are called \emph{desmic} if any
two of them are perspective with respect to any vertex
of the third one.
Equivalently, three tetrahedra are desmic if they can be included in a pencil of quartic surfaces, called a desmic pencil.
An irreducible member of a desmic pencil is called a \emph{desmic quartic surface}.
It contains 16 lines,
the base locus of the pencil, and 12 nodes lying by pairs in the edges of any desmic tetrahedron.
Each line passes through 3 nodes, and each node is contained
in four lines. A desmic pencil defines an associated desmic pencil such that the
original twelve nodes
are the vertices of
the new desmic
tetrahedra. George Humbert, in his study of desmic quartic surfaces \cite{Humbert}, showed that the set of lines contained
in a quadric passing through the vertices of any two of the desmic tetrahedra in
the associated desmic pencil is a cubic line complex $\mathfrak{G}$ in the
Grassmannian $G_1(\bbP^3)$ of lines in $\bbP^3$;
it does not depend on the choice of
the pair of tetrahedra used.
The complex $\mathfrak{G}$
contains 24 planes, twelve from each family of planes in the Pl\"{u}cker embedding of $G_1(\bbP^3)$
in $\bbP^5$.

The subject of the present paper is a transversal hyperplane section $X$ of $\mathfrak{G}$, which is a smooth K3 surface equal
to the complete intersection of a quadric and a cubic hypersurface in $\bbP^4$.
 In this paper, we call $X$ a Humbert sextic K3
surface (this should not be confused with the Humbert surfaces
from the theory of abelian surfaces with special properties of their endomorphism ring, see \cite[Chap. IX]{vGeer}).
It contains two sets of twelve lines whose incidence relation
is an abstract configuration $(12_6,12_6)$.
It will be shown elsewhere
 by the first author
that other sextic K3 surfaces
cannot contain such a configuration of lines.

Projecting from any line, we find a birational model of $X$ isomorphic to a double cover of $\bbP^2$ branched along a plane sextic $B$ with nodes at the
vertices of a complete quadrilateral $P$, see \autoref{planemodel}.
The curve $B$ is
contact (i.e., has an even intersection index at each intersection point,
which are all distinct from the nodes) to
the diagonals of $P$, a nodal plane cubic, and six lines in general linear position. Although the ten
 tangency conditions seem to impose too many
constraints on the existence of the curve $B$, of which we have a $5$-dimensional family,
in \autoref{s.plain}.
 We prove that, in fact, the conditions of the tangency
to the diagonals and one line
almost imply (modulo some finite combinatorial choices)
all other conditions. This was made possible
by the computation of the Picard lattice of a general
Humbert sextic $X$: it turns out to be freely generated by 15 lines,
see \autoref{s.NS}, \autoref{trans}.

 Among other things dealt with
in the paper is a description of smooth rational curves of degree
at most $4$
(see \autoref{s.lines} and \autoref{s.rational})
and elliptic pencils
(see \autoref{s.elliptic})
on a very general Humbert sextic K3 surface $X$.
We also discuss the
groups of projective and birational automorphisms of $X$,
see \autoref{s.aut}. Most notably, we show that any projective
automorphism of $X$ is induced from one of the Humbert line complex~$\frakG$;
in view of~\cite{DolgKondo}, this gives us a complete description of such
automorphisms (\autoref{prop.projective}).

 We work over the field of complex numbers, however, many of our results are valid assuming only that the ground field is an algebraically closed field
of characteristic
other than $2$ or $3$.

\section{Desmic pencils and the Humbert cubic line complex}\label{s.intro}
In this section, we introduce  Humbert sextic K3 surfaces
and discuss the configuration $(12_6,12_6)$ of lines on them.

\subsection{The line complex}\label{s.complex}
Consider three tetrahedra in $\bbP^3$:
\begin{equation}
\alignedat2
&T_1:\ &(x^2-y^2)(z^2-w^2) &= 0,\\
&T_2:\ &(x^2-z^2)(y^2-w^2) &= 0,\\
&T_3:\ &(w^2-x^2)(z^2-w^2) &= 0.
\endaligned
\end{equation}
They are desmic, i.e., belong to the same pencil
$$aT_1+bT_2+cT_3 = 0,\quad a+b+c = 0.$$
Any other member of the pencil is a \emph{desmic quartic surface} with 12 nodes
given in \eqref{points}, lying by pairs on the
edges of each of the tetrahedra (see \cite{DolgKondo}).
It also contains 16 lines lying by four on each face of
each
tetrahedron and intersecting the edges
at the singular points. This forms a configuration $(12_4,16_3)$ of nodes and lines.

Consider the following twelve points $P_i$ in $\bbP^3$:
\begin{equation}\label{points}
\def\no#1{{\scriptstyle#1}\colon}
\def\sp{\ \,}
\alignedat4
\no1&[0,0,0,1],\sp&\no2&[0,0,1,0],\sp& \no3&[0,1,0,0],\sp& \no4& [1,0,0,0],\\
\no5&[1,1,1,1],\sp&\no6&[1,-1,1,-1],\sp& \no7&[1,1,-1,-1],\sp& \no8& [1,-1,-1,1],\\
\no9&[1,-1,1,1],\sp&\no{10}&[1,1,-1,1],\sp& \no{11}&[1,1,1,-1],\sp& \no{12}& [-1,1,1,1].
\endalignedat
\end{equation}
They lie in pairs on the edges of any of the desmic tetrahedra with
the faces $\Pi_j$:
\begin{equation}\label{planes}
\def\no#1{{\scriptstyle#1}\colon}
\alignedat8
\no1&&x+y&= 0,\quad& \no2&&x-y&=0,\quad&\no3&&z+w&=0,\quad&\no4&&z-w&=0,\\
\no5&&y-w&= 0,\quad& \no6&&x-z&=0,\quad&\no7&&y+w&=0,\quad&\no8&&x+z&=0,\\
\no9&&y+z&= 0,\quad&\no{10}&&x-w&=0,\quad&\no{11}&&x+w&=0,\quad&\no{12}&&y-z&=0.
\endalignedat
\end{equation}
Each of the twelve points lies in six planes. Each plane contains six points.

Each pair of points
in the
same row of~\eqref{points} lies in two planes
from~\eqref{planes}, and each pair of planes
from the same row of~\eqref{planes} contains two common points
from~\eqref{points}.
The precise incidence relations are illustrated in \autoref{table}.
\begin{table}[htbp]
\begin{center}
\caption{The incidence relation between $12+12$ lines}\label{table}
\hrule height0pt
\def\rb#1{\rotatebox[origin=c]{90}{\ $#1$\ }}
\def\cb#1{\hbox to11pt{\hss#1\hss}}
\def\vr{\vrule width0pt height11pt}
\def\*{\rlap{$^*$}}
\scalebox{1}{%
\begin{tabular}{|c|c|cccc|cccc|cccc|}\hline
&&\rb{x+y}&\rb{x-y}&\rb{z+w}&\rb{z-w}&\rb{y-w}&\rb{x-z}&\rb{y+w}&\rb{x+z}&
 \rb{y+z}&\rb{x-w}&\rb{x+w}&\rb{y-z}\\\hline
&&\vr\cb{1\*}&\cb{2\*}&\cb{3}&\cb{4}&\cb{5}&\cb{6\*}&\cb{7}&\cb{8\*}&\cb{9\*}&\cb{10}&\cb{11}&\cb{12\*}\\
 \hline\vr
$[0,0,0,1]$&1\*&$\bullet$&$\bullet$&&&&$\bullet$&&$\bullet$&$\bullet$&& &$\bullet$ \\
$[0,0,1,0]$&2&$\bullet$&$\bullet$&&&$\bullet$&&$\bullet$&&&$\bullet$&$\bullet$&\\
$[0,1,0,0]$&3&&&$\bullet$&$\bullet$&&$\bullet$& &$\bullet$&&$\bullet$ &$\bullet$&\\
$[1,0,0,0]$&4&&&$\bullet$&$\bullet$&$\bullet$&&$\bullet$&&$\bullet$&&&$\bullet$\\
 \hline\vr
$[1,1,1,1]$&5&&$\bullet$&&$\bullet$&$\bullet$&$\bullet$&&&  &$\bullet$&&$\bullet$\\
$[1,-1,1,-1]$&6&$\bullet$&&$\bullet$&&$\bullet$&$\bullet$&&&$\bullet$&&$\bullet$&\\
$[1,1,-1,-1]$&7&&$\bullet$&&$\bullet$&&&$\bullet$&$\bullet$&$\bullet$&&$\bullet$&\\
$[1,-1,-1,1]$&8&$\bullet$&&$\bullet$&&&&$\bullet$&$\bullet$&&$\bullet$&&$\bullet$\\
 \hline\vr
$[1,-1,1,1]$&9&$\bullet$&&&$\bullet$&&$\bullet$ &$\bullet$&&$\bullet$&$\bullet$&&\\
$[1,1,-1,1]$&10&&$\bullet$&$\bullet$&&$\bullet$&&&$\bullet$&$\bullet$&$\bullet$& &\\
$[1,1,1,-1]$&11&&$\bullet$&$\bullet$&&&$\bullet$ &$\bullet$&&&&$\bullet$&$\bullet$\\
$[-1,1,1,1]$&12&$\bullet$&&&$\bullet$&$\bullet$&&&$\bullet$&&&$\bullet$&$\bullet$ \\ \hline
\end{tabular}}
\end{center}
\end{table}

George Humbert \cite{Humbert}
constructed a line complex of degree 3, i.e., a hypersurface $\mathfrak{G}$ in the
Grassmannian $G_1(\bbP^3) \subset \bbP^5$ cut out by a cubic hypersurface in $\bbP^5$
whose rays are lines in quadrics passing
through
the eight points forming any two rows in \eqref{points}. It contains
\begin{equation}
\aligned
&\text{$12$\  $\alpha$-planes of rays through each of the
points
$P_1,\ldots, P_{12}$ from \eqref{points}},\\
&\text{$12$\ $\beta$-planes
of rays lying in each plane $\Pi_1,\ldots,\Pi_{12}$ from \eqref{planes}}.
\endaligned
\label{eq.alpha-beta}
\end{equation}
The cubic complex  $\mathfrak{G}$ has 34 isolated singular points, 16 of which are lines on a desmic quartic surface, and
18 are the edges of all three tetrahedra.

\begin{defn} A \emph{Humbert sextic K3 surface} $X$ is a transversal hyperplane section of
$\mathfrak{G}$.
\end{defn}

\begin{remark}\label{rem.tangent}
Thus, we assume that $X$ is cut off $\frakG$ by a
hyperplane~$\plane$
transversal to~$\frakG$, but not necessarily to $\quadric=G_1(\bbP^3)$.
Note though that, even if $\plane$ is tangent to~$\quadric$,
none of the $24$ planes~\eqref{eq.alpha-beta} lies in~$\plane$, as otherwise
$\frakG$ would contain the tangency point and $\plane$ would not be
transversal to $\frakG$.
\end{remark}

\begin{convention}\label{conv.alpha-beta}
In view of \autoref{rem.tangent}, the hyperplane~$\plane$
cuts each $\alpha$-plane
corresponding to a point $P_i$
and each $\beta$-plane corresponding to a plane $\Pi_i$
see~\eqref{eq.alpha-beta}, along a line.
If $P_i\in\Pi_j$,
the corresponding
$\alpha$-plane and $\beta$-plane intersect along the line consisting of rays passing
through $P_i$ and lying on $\Pi_j$.
This gives us twelve \emph{$\alpha$-lines} $L_1,\ldots,L_{12}$ and twelve
\emph{$\beta$-lines} $M_1,\ldots,M_{12}$
on $X$ forming a configuration $(12_6,12_6)$;
we use the same numbering
for the lines as that for the
planes, so that their
incidence matrix is also given by \autoref{table}.

In the table, each of the $12$-tuples $\alpha$, $\beta$ is divided into three groups of four,
called \emph{quartets}:
$$
\alpha=\qa_1\cup\qa_2\cup\qa_3,\quad
\beta=\qb_1\cup\qb_2\cup\qb_3.
$$

From now on, we reserve $H$ for the divisor class of a hyperplane section
of~$X$, and we identify the $\alpha$-lines $L_1,\ldots,L_{12}\in \alpha$ and
$\beta$-lines $M_1,\ldots,M_{12}\in \beta$
(as well as other irreducible curves $C$ with $C^2<0$)
with their divisor classes.
\end{convention}

\subsection{Equation}\label{s.eqns}
One can replace the Pl\"{u}cker coordinates in the Pl\"{u}cker embedding
$G_1(\bbP^3)\hookrightarrow \bbP^5$ with the Klein coordinates to get the following
explicit equation of the Humbert cubic line complex (see \cite[the equation (12)]{DolgKondo}):
\beq\label{eqCC}
\begin{split}
&Q:= x_1^2+x_2^2+x_3^2+y_1^2+y_2^2+y_3^2 = 0,\\
&F:= x_1x_2x_3+\ii y_1y_2y_3 = 0.
\end{split}
\eeq
So, the equation of the Humbert
sextic K3
surface in $\bbP^5$ is obtained by adding
an extra
equation
\begin{equation}
\plane= a_1x_1+a_2x_2+a_3x_3+a_4y_1+a_5y_2+a_6y_6 = 0.
\label{eq.HS}
\end{equation}
The condition
that the surface is singular defines a closed
subset of $\bbP^4$ (for example, containing the intersection of two coordinate hyperplanes $a_i=a_j = 0$).

The set $\calP$ of
the
24 planes is naturally indexed by
the
elements of the
symmetric group
$\frakS_4\cong (\bbZ/2\bbZ)^2 \rtimes \frakS_3$:
interpreting $(\bbZ/2\bbZ)^2$
as $\{\pm\ii\}^3/\epsilon_1\epsilon_2\epsilon_3$,
the plane corresponding to
$\sigma\in\SG3$ and $(\epsilon_1,\epsilon_2,\epsilon_3)\in(\bbZ/2\bbZ)^2$
is
\beq\label{Planes2}
V(x_1-\epsilon_1y_{\sigma(1)},x_2-\epsilon_2y_{\sigma(2)},
x_3-\epsilon_3y_{\sigma(3)}).
\eeq
The $\alpha$- and $\beta$-planes differ by the sign of $\sigma$.

Note that equation \eqref{eqCC} is invariant with respect to the natural action of the
group $\frakS_4\times \frakS_4\cong  (\bbZ/2\bbZ)^4 \rtimes (\frakS_3\times \frakS_3)$
(and it is this action that defines the group structure on the set
theoretic Cartesian product of $(\Z/2\Z)^2$ and $\SG3$ above). It is also invariant with respect to
the transformation $g_0$ of order $4$ defined by
$$(x_1,x_2,x_3,y_1,y_2,y_3)\mapsto (-y_1,-y_2,-y_3,x_1,x_2,x_3).$$
It is proved in \cite{DolgKondo} that the group $G = (\frakS_4\times \frakS_4)\rtimes \bbZ/2\bbZ$
of order $1152$ is the
full
group of projective automorphisms
of the cubic line complex $\frakG$.

\subsection{Lines, conics, quartics}\label{s.lines}
The K3 surface $X$ is a transversal intersection of a quadric
$\quadric\cap(\text{hyperplane})$ and a cubic, where $\quadric =V(Q)$.
A hyperplane section of $X$ is a curve of bidegree $(3,3)$ on a quadric.
It follows from \autoref{table} that  there are $3\binom{4}{2} = 18$  hyperplane sections
that contains a
\emph{quadrangle} of lines, two from each ruling of the quadric,
and an irreducible residual conic.
Each of these quadrangles, called \emph{proper},
is determined by its
pair of $\alpha$-lines (or pair of $\beta$-lines) and can be characterised by
the property that its $\alpha$-lines (resp.\ $\beta$-lines) are in the same
quartet.

Each
line belongs to 3 proper quadrangles, indexed by the quartets in the opposite
family.
Thus, the $24$ lines and $18$
proper quadrangles form an abstract configuration
$(24_3,18_4)$. Each proper quadrangle determines a pair $\qa_r,\qb_s$ of
quartets, i.e., a $(4\times4)$-cell in \autoref{table}. Each cell is the
union of
two proper quadrangles.

There are  $16$ hyperplane sections that contain
a union of 6 lines, three from each family $\alpha,\beta$.
Its dual graph is
a complete bipartite graph $K(3,3)$.
For this reason, we call such unions of lines \emph{$(3,3)$-configurations} of
lines, or just \emph{$(3,3)$-fragments} (of the full configuration of lines
of a given K3 sextic).
The $\alpha$- (resp.\ $\beta$-) lines of each $(3,3)$-configuration are in
a bijection with the $\alpha$- (resp.\ $\beta$-) quartets.
The $16$ $(3,3)$-configurations and
$24$ lines
form an abstract configuration $(16_6,24_4)$. It follows that
$16H \sim 4\sum_{i=1}^{12}(L_i+M_i)$, hence,
\beq\label{mod4}
\sum_{i=1}^{12}(L_i+M_i) \sim 4H.
\eeq

Each $(3,3)$-fragment consists of nine
\emph{improper}
quadrangles (with all $144=16\times9$ quadrangles pairwise
distinct), which are characterised by the property that their $\alpha$-lines
(resp.\ $\beta$-lines) are in distinct quartets.
In more detail, $(3,3)$-configurations on smooth sextic K3 surfaces are
discussed in \autoref{rem.3,3.property} below.

For each quadrangle~$q$, we denote by $\sum q$ the sum of
the four lines in $q$.
Thus,
each quadrangle~$q$ gives rise to a residual conic $C_q\in|H-\sum q|$;
the latter
is irreducible if and only if $q$ is proper.
If the $\alpha$-lines (hence, also $\beta$-lines) of two proper quadrangles
$q_1$, $q_2$ constitute a whole quartet, $q_1$ and $q_2$ are disjoint and,
hence, constitute two singular fibers of a common elliptic pencil. It follows
that $\sum q_1=\sum q_2$ and $C_1=C_2$.
This common conic is the intersection $\mathcal{Q}\cap H_1\cap H_2$,
where $H_i\subset\Cp4$ is the hyperplane spanned by $q_i$.

\begin{observation}\label{rem.conics}
We
conclude that the conics on~$X$ (at least those accounted for so far) can
be indexed by
\begin{enumerate}
\item\label{index.a}
pairs $L_i,L_j$ of $\alpha$-lines that are in the same quartet, or
\item\label{index.b}
pairs $M_i,M_j$ of $\beta$-lines that are in the same quartet, or
\item\label{index.qu}
pairs $(\qa_r,\qb_s)$ of quartets, i.e., the nine $(4\times4)$-cells in \autoref{table}.
\end{enumerate}
The first two indexing schemes are two-to-one, so we end up with the
$9$ conics $C_{rs}$, $1\le r,s\le3$, rather
than the expected $18$.

Indexed as in~\eqref{index.qu}, one has
$C_{rs}\cdot L_i=1$ if and only if $L_i\in\qa_r$ and
$C_{rs}\cdot M_j=1$ if and only if $M_j\in\qb_s$;
otherwise, $C_{rs}\cdot N=0$ for a line~$N$.
Besides, $C_{rs}\cdot C_{uv}=2$ if $r\ne u$, $s\ne v$ or $0$ otherwise.
For this reason, \eqref{index.qu} is the preferred indexing scheme.
\end{observation}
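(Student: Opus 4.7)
The proof is a direct intersection-theoretic computation resting on two inputs already in place: the identity $\sum q_1=\sum q_2$ in $\Pic(X)$ for the two proper quadrangles $q_1,q_2$ sharing a single $(4\times4)$-cell $(\qa_r,\qb_s)$ of \autoref{table} (whence $C_{q_1}=C_{q_2}$), and the line-incidence pattern --- the $\alpha$-lines are pairwise skew, the $\beta$-lines are pairwise skew, and each $\alpha$-line meets exactly two of the four $\beta$-lines in any $\beta$-quartet (and symmetrically) --- all of which can be read off \autoref{table}.

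First I would dispose of the indexing and the count of $9$. Each proper quadrangle selects two $\alpha$-lines inside a common quartet $\qa_r$ and two $\beta$-lines inside a common quartet $\qb_s$, so the $18$ proper quadrangles distribute as exactly two per cell $(r,s)$: the two complementary $2$-subsets of $\qa_r$ matched with the two complementary $2$-subsets of $\qb_s$. Identifying complementary quadrangles (which share their residual conic) collapses them in pairs and leaves $9$ conics $C_{rs}$, one per cell. Since a pair of same-quartet $\alpha$-lines (resp.\ $\beta$-lines) reconstructs the cell via its quadrangle, the three indexing schemes coincide; the distinctness of the $C_{rs}$'s follows a posteriori from the intersection numbers below.

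The intersection numbers with lines are then obtained by adding the two cell-representatives. Writing $\bar{\qa}_r:=\sum_{L\in\qa_r}L$ and $\bar{\qb}_s:=\sum_{M\in\qb_s}M$, the identity $C_{q_1}+C_{q_2}=2C_{rs}$ yields
\begin{equation*}
2C_{rs}=2H-\bar{\qa}_r-\bar{\qb}_s.
\end{equation*}
Intersecting with $L_i$ and invoking $L_i^2=-2$, skewness of the $\alpha$-lines, and the uniform identity $L_i\cdot\bar{\qb}_s=2$, one gets $2\,C_{rs}\cdot L_i=2$ if $L_i\in\qa_r$ and $0$ otherwise; the $\beta$-side is symmetric. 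For a pair of conics, I would pick any quadrangle $q_{uv}$ representing $C_{uv}$ and expand
\begin{equation*}
C_{rs}\cdot C_{uv}=C_{rs}\cdot\bigl(H-\textstyle\sum q_{uv}\bigr)=2-\sum q_{uv}\cdot C_{rs};
\end{equation*}
the previous step evaluates the right-hand side as $2-2\delta_{r,u}-2\delta_{s,v}$, yielding $-2$ for $(r,s)=(u,v)$ (the expected $(-2)$-curve self-intersection), $0$ when exactly one index matches, and $2$ when both differ.

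No serious obstacle is anticipated. The most delicate ingredient is the uniform ``two-out-of-four'' count of $\alpha\cdot\beta$ incidences within each quartet pair; this is visible directly in \autoref{table} and is in fact forced by the transitivity of the $G$-action on quartet pairs recorded in \autoref{s.eqns}.
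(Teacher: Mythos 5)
Your proposal is correct and follows essentially the same route as the paper: the Observation is presented there as a direct consequence of the residual-conic construction $C_q\in|H-\sum q|$, the identity $\sum q_1=\sum q_2$ for the two quadrangles of a cell, and the incidence data of \autoref{table}, and your computation (via $2C_{rs}=2H-\bar\qa_r-\bar\qb_s$ and the uniform ``two per opposite quartet'' count) simply makes explicit the arithmetic the paper leaves implicit. All the intersection numbers you derive, including $C_{rs}^2=-2$ and the $2$-to-$1$ collapse of the $18$ quadrangles onto $9$ conics, agree with the statement.
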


\begin{remark}\label{equationconic}
One can see the 9 conics and 18 proper quadrangles from~\eqref{eqCC}.

The cubic hypersurface $V(F)$ contains nine
three-dimensional subspaces $V(x_i,y_j)$,
each intersecting the quadric $\mathcal{Q}$ along a
quadric surface. Intersecting the latter by
$V(R)$, we obtain nine conics $C_{ij}$ contained in $X$.  Each
$C_{ij}$ is contained in two hyperplanes $H_{ij}^{\pm}  =  V(x_i\pm
iy_j)$. The hyperplane $H_{ij}^{\pm}$ cuts
$X$ along a quadrangle cut out by the planes
\eqref{Planes2}
with $\sigma(i) = j$. This gives us
the
18 proper quadrangles of lines.
\end{remark}

\begin{observation}\label{rem.quartics}
If $L_i\in\alpha$ and $M_j\in \beta$ are skew lines from distinct families,
the residual curve $Q_{ij}\in|H-L_i-M_j|$ is a smooth rational
quartic curve. This gives us
$6\times 12  = 72$ quartics, all distinct.
The lines $L_i$, $M_j$ are singled out via
$Q_{ij}\cdot L_i=Q_{ij}\cdot M_j=3$ whereas $Q_{ij}\cdot N\in\{0,1\}$
for any other line~$N$.
\end{observation}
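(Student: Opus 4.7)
The plan is to split the proof into routine numerical checks, an existence step, an irreducibility argument, and a counting check.

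First, using $H^2=6$, $H\cdot L=H\cdot M=1$, $L^2=M^2=-2$, the skew hypothesis $L_i\cdot M_j=0$, and the fact (implicit in \autoref{conv.alpha-beta} and in the bipartite form of \autoref{table}) that distinct $\alpha$-lines, respectively distinct $\beta$-lines, are pairwise skew, a direct computation yields
\[
Q_{ij}\cdot H=4,\qquad Q_{ij}^2=-2,\qquad Q_{ij}\cdot L_i=Q_{ij}\cdot M_j=3,
\]
and, for any line $N$ distinct from $L_i$ and $M_j$,
\[
Q_{ij}\cdot N=1-N\cdot(L_i+M_j)\in\{0,1\},
\]
the precise value being read off \autoref{table}. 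This settles the last two assertions of the statement and gives $p_a(Q_{ij})=\tfrac12 Q_{ij}^2+1=0$.

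Next, since $L_i$ and $M_j$ are skew lines in $\bbP^4$ they span a unique hyperplane $H_{ij}\cong\bbP^3$, and one sets $Q_{ij}:=(X\cap H_{ij})-L_i-M_j$, an effective divisor of class $H-L_i-M_j$ and degree~$4$. The main task is to show irreducibility. I would argue this by excluding every possible splitting $Q_{ij}=A+B$ into nonzero effective summands. Each component of $A$ and $B$ is a rational curve lying in $H_{ij}\cap X$, of $H$-degree~$1$, $2$, or $3$, so belongs to one of the known lists: the $24$ lines, the $9$ conics $C_{rs}$ of \autoref{rem.conics}, or a residual rational cubic. A line component $N\ne L_i,M_j$ forces $N\cdot(Q_{ij}-N)=Q_{ij}\cdot N-N^2\in\{2,3\}$, and combining this with the condition $N\subset H_{ij}=\operatorname{span}(L_i,M_j)$ and the incidences from \autoref{table} eliminates the possibility. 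Conic and cubic summands are ruled out analogously, using the intersection data of $C_{rs}$ with lines recorded in \autoref{rem.conics}. For very general $X$, a cleaner argument becomes available once the Picard lattice is known (\autoref{s.NS}): in the explicit basis, the class $H-L_i-M_j$ admits no decomposition into two nonzero effective summands. This case analysis is the step I expect to be the most delicate, as it is precisely where genericity of the hyperplane $\plane$ enters.

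Once irreducibility is established, $Q_{ij}$ is an integral curve with $p_a=0$, hence a smooth rational quartic, as claimed. For the count, each of the $12$ $\alpha$-lines $L_i$ is skew to exactly $6$ of the $\beta$-lines (the non-bullets in its row of \autoref{table}), giving $12\times 6=72$ pairs. Distinctness is immediate: the equality $H-L_i-M_j=H-L_{i'}-M_{j'}$ in $\NS(X)$ reduces to $L_i+M_j=L_{i'}+M_{j'}$, which by the explicit basis of $\NS(X)$ in \autoref{s.NS}, in which the classes $L_k$ and $M_\ell$ are linearly independent across families, forces $(i,j)=(i',j')$.
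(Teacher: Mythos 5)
Your proposal is essentially the paper's (largely implicit) argument: the statement is left as an unproved Observation, with the numerology exactly as you compute it, and the rigorous verification of irreducibility, smoothness, and the count deferred to \autoref{s.rational}, where Vinberg's algorithm over the computed Picard lattice $S$ returns exactly $72$ rational quartics --- i.e.\ precisely the ``cleaner argument once the Picard lattice is known'' that you point to. Two minor caveats. First, your hand-rolled case analysis of splittings $Q_{ij}=A+B$ is partly circular as sketched: the facts that the only conics are the nine $C_{rs}$ and that there are no twisted cubics are themselves outputs of the same lattice computation, so for a genuinely independent argument you would have to run the exclusion purely lattice-theoretically (show $H-L_i-M_j$ is not a sum of two effective classes in $S$), which is in effect what the paper's algorithmic enumeration does; note also that genericity is genuinely needed here, as \autoref{rem.degenerations} records that smoothness of the quartics can fail on special members. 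Second, your distinctness argument is not quite right as stated: the $24$ line classes span only a rank-$15$ lattice, so they are not ``linearly independent across families,'' and moreover not all of them lie in the basis~\eqref{basis}. The correct and immediate argument is the one already contained in the intersection numbers you computed: if $Q_{ij}=Q_{i'j'}$ with $i\ne i'$, then $Q_{ij}\cdot L_{i'}=3$, contradicting $Q_{ij}\cdot N\in\{0,1\}$ for every line $N\ne L_i,M_j$.
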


In \autoref{s.rational} below we assert that, apart from the $24$ lines, $9$
conics, and $72$ quartics described in this section, a general Humbert sextic
has no smooth rational curves of degree up to~$4$.
There are infinitely many other smooth rational curves, see
\autoref{s.aut}.

\begin{observation}\label{obs.group}
We use
the \texttt{digraph} package in \texttt{GAP}~\cite{GAP4.13} to compute the group
$G=\Sym(\Gamma)$ of symmetries of the
dual adjacency graph~$\Gamma$ of lines on~$X$:
one has
$|G|=1152=16\times72$, and the group is generated by the involution
$L_i\leftrightarrow M_i$, $i=1,\ldots,12$, and two permutations
$$ L_i\mapsto L_{\sigma_1(i)},\quad M_i\mapsto M_{\sigma(i)},\quad  \sigma = (1,9,4,12)(2,10,3,11)(5,8,6,7),$$
and
$$\alignedat2
L_i&\mapsto L_{\sigma(i)}, &\quad \sigma &= (1,10,8,2,11,6)(3,9,7)(4,12,5),\\
M_i&\mapsto M_{\sigma(i)}, &\quad \sigma &= (1,2,3)(5,12)(6,9,8,10,7,11).
\endalignedat
$$
The group acts transitively on the set of the
$(3,3)$-fragments, and the stabilizer of a $(3,3)$-fragment $q$ is
isomorphic to the full group
$\Sym(q)=(\mathfrak{S}_3\times\mathfrak{S}_3)\rtimes\Z/2$.
In particular, $G$ is transitive. Alternatively, $G$ induces the full
automorphism group $(\mathfrak{S}_3\times\mathfrak{S}_3)\rtimes\Z/2$ on the
set of the $(4\times4)$-cells in \autoref{table}.
As an abstract graph, each cell~$c$ is the disjoint union of two quadrangles
and the stabilizer of~$c$ in $G$ maps two-to-one onto the index~$2$ subgroup
of $\Sym(c)=(\DG_8\times\DG_8)\rtimes\Z/2$
that is not mixing $\alpha$- and $\beta$-lines.

A posteriori one can easily verify that $\Sym(\Gamma)$ is indeed as
claimed:
the three permutations indicated do belong to $\Sym(\Gamma)$
and,
given \autoref{table}, it is immediate that any
$g\in\Sym(\Gamma)$ fixing pointwise a certain $(3,3)$-fragment~$q$ is the
identity.
\end{observation}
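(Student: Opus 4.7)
The plan is to reduce the claim to a short sequence of combinatorial verifications against \autoref{table}, culminating in the computation $|G|=16\cdot72$ via orbit--stabilizer, with the main input being a rigidity lemma for $(3,3)$-fragments. As a first step, I would check by direct row-by-row inspection of \autoref{table} that each of the three displayed permutations preserves the bullet pattern, hence belongs to $G=\Sym(\Gamma)$; the involution $L_i\leftrightarrow M_i$ is immediate since the incidence matrix in the table is symmetric in the chosen numbering.

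The crux is the following rigidity lemma: \emph{any $g\in G$ fixing pointwise a $(3,3)$-fragment $q$ is the identity}. Since $\Gamma$ is bipartite with parts $\alpha,\beta$ and $g$ has fixed points in both, $g$ preserves the bipartition. Fix a concrete representative, e.g.\ $q=\{L_1,L_6,L_9\}\cup\{M_1,M_6,M_9\}$, and tabulate from \autoref{table}, for each of the $18$ lines outside~$q$, its adjacencies with the remaining $17$ lines; inspection then confirms that these restricted neighborhoods are pairwise distinct, forcing $g$ to fix each off-fragment line as well. Mere ``signatures'' of a line against the three opposite-color lines of~$q$ are insufficient by pigeonhole ($9>2^3$); the extra data from the adjacencies among the off-fragment lines supplies the missing refinement.

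Given the lemma, $\operatorname{Stab}_G(q)$ embeds into the symmetry group $\Sym(q)=(\SG3\times\SG3)\rtimes\Z/2$ of order $72$ of the abstract bipartite graph $K(3,3)$. Equality is exhibited by writing $72$ distinct symmetries of~$q$ as words in the three listed generators and their conjugates; coupled with a direct check that the $16$ fragments form a single $G$-orbit (again an orbit computation under the generators), orbit--stabilizer yields $|G|=16\cdot72=1152$. The $(4\times4)$-cell assertions then follow by the same bookkeeping: transitivity on the $9$ cells gives $|\operatorname{Stab}_G(c)|=128$; any element stabilizing a non-diagonal cell $c=(\qa_r,\qb_s)$ must preserve the $\alpha/\beta$ bipartition inside~$c$, so the induced action lands in the index-$2$ subgroup of $\Sym(c)=(\DG_8\times\DG_8)\rtimes\Z/2$, and by cardinality the map is $2$-to-$1$ with a $2$-element pointwise kernel readily identified from the generators. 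The image of $G\to\Sym_9(\text{cells})$ is then the full $(\SG3\times\SG3)\rtimes\Z/2$ acting on the $3\times3$ grid, with kernel of order~$16$.

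The main obstacle is the rigidity lemma: the naive pigeonhole attempt falls short by one, and completing it requires tracking the adjacencies among the $18$ off-fragment lines rather than only their signatures against~$q$. This is precisely the kind of finite check best delegated to a computer algebra system, as the authors do via the \texttt{digraph} package in \texttt{GAP}; the post-hoc verification above is then a self-contained alternative.
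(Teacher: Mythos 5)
Your proposal takes essentially the same route as the paper: the authors compute $G$ with the \texttt{digraph} package in \texttt{GAP} and then record precisely your two a posteriori checks --- that the displayed permutations lie in $\Sym(\Gamma)$ and that any $g\in\Sym(\Gamma)$ fixing a $(3,3)$-fragment pointwise is the identity --- from which the order $16\times72$ and the fragment/cell stabilizer statements follow by orbit--stabilizer exactly as you describe. One small caution on your rigidity lemma: ``pairwise distinct restricted neighborhoods'' does not by itself force $g$ to fix the off-fragment lines (an automorphism may permute vertices with distinct neighborhoods), but the partition refinement you allude to does close the gap --- adjacency to the fixed fragment splits the $18$ remaining lines into six $3$-element classes preserved by $g$, and the degree vectors of the lines into these classes are already distinct within each class, so one refinement step reaches the discrete partition.
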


\begin{remark}\label{rem.3,3.property}
\let\L=A\let\M=B%
The configuration of lines on any smooth
sextic K3 surface $X\subset\Cp4$ (not necessarily the one considered in this paper)
has the following \emph{$(3,3)$-property}: given five distinct lines
$\L_1,\L_2,\L_3$ and $\M_1,\M_2$, such that $\L_i\cdot \M_k=1$ for all $i$,
$k$, there is a unique sixth line $\M_3$ such that $q=\{\L_1,\ldots,\M_3\}$
constitute
a $(3,3)$-configuration. Furthermore, any line on~$X$ that is not in~$q$
intersects exactly one line in~$q$. Arithmetically, $\M_3$ is found from
$$
\L_1+\cdots+\M_3=H\quad\text{in}\quad \Pic(X).
$$
Geometrically, once the residual conic~$C$ above splits, its two components
are in the two distinct rulings of the quadric. Note that we do
not even need to assume beforehand that the $\L$-lines or $\M$-lines are
pairwise disjoint. Should there be $n>0$ extra intersection points, the class
$$
e=H-(\L_1+\L_2+\L_3+\M_1+\M_2)\in\Pic(X)
$$
would have
$e^2=2n-2$ and $e\cdot H=1$. If $n=1$,
then, since $H$ is ample,
$e$ must be the class of an irreducible curve of arithmetic genus one, and
$|H|$ restricted to the curve has a base point, contradicting
\cite[Theorem 3.1]{Saint-Donat}.
If $n\ge2$,
the sublattice $\Z H+\Z e$
is positive definite, contradicting the Hodge index theorem.

In particular, it follows also that two lines $\M_1,\M_2$
cannot meet more
than three common lines $\L_i$. A similar argument shows that two
\emph{intersecting} lines $\M_1,\M_2$ can meet at most one common line.
\end{remark}

\section{Double plane model}\label{planemodel}

Let us consider the
projection
$$f\colon X\to \bbP^2$$
of $X$ with center at some $\alpha$-line or $\beta$-line, say $L = L_1$. It is given by the linear system
$|H-L|$. Its restriction
to $L$ is a hyperplane in $|\calO_L(3)|\cong \bbP^3$,
which has no base points;
hence, $f$ is a regular map.

We split $\beta$ into the complementary subsets
$$
\beta^*_1=\{M_1,M_2,M_6, M_8, M_9,M_{12}\},\quad
\bar\beta^*_1=\{M_3,M_4,M_5,M_7,M_{10},M_{11}\}
$$
of the lines
that, respectively, intersect or are disjoint from~$L$; the former are
marked with a $*$ in \autoref{table}.
(The subscript $1$ refers to the chosen line $L=L_1$.)

The standard formula for the canonical class of a double cover
shows that the branch curve $B$ of $f$ is of degree $6$.
The lines  $M_i\in\beta^*_1$ intersecting $L$ are blown down to the
nodes $p_1,\ldots,p_6$ of $B$.

It follows from \autoref{table} that
\begin{itemize}
\item $M_2,M_6,M_{12}$ intersect $L_5,L_{11}$,
\item $M_1, M_6, M_9$ intersect $L_6,L_{9}$,
\item  $M_2,M_8,M_9$ intersect $L_7,L_{10}$,
\item  $M_1,M_8,M_{12}$ intersect  $L_8,L_{12}$,
\item $M_1,M_2$ intersect $L_2$,
\item $M_6,M_8$ intersect $L_3$,
\item $M_9,M_{12}$ intersect $L_4$.
\end{itemize}
Since $(H-L)\cdot L_j = 1$ for any $j\ne 1$,
the images $f(L_j)$ are lines in the plane.
Each of the pairs
$(L_5,L_{11})$, $(L_6,L_{9})$, $(L_7,L_{10})$, $(L_8,L_{12})$ is mapped to the same line passing
through three of the nodes. Thus, their images form a complete quadrilateral with vertices
$p_1,\ldots,p_6$ and sides
$$
\alignedat2
\ell_{236} &= \la p_2,p_3,p_6\ra,\quad
&\ell_{135} &= \la p_1,p_3,p_5\ra, \\
\ell_{245} &= \la p_2,p_4,p_5\ra, \quad
&\ell_{146} &= \la p_1,p_4,p_6\ra.
\endalignedat
$$
The remaining $\alpha$-lines $L_2,L_3,L_4$  are mapped to the diagonals
$$
\ell_{12} = \la p_1,p_2\ra, \quad \ell_{34} = \la p_3,p_4\ra, \quad \ell_{56} = \la p_5,p_6\ra.
$$

The lines $L_i$, $i=2,3,4$, are in the same quartet with~$L_1$, thus giving
rise to conics $C_{1i}$ (see \autoref{rem.conics}). These conics
are
mapped to the diagonals, i.e.,
$$
\aligned
f^{-1}(\ell_{12}) &= M_1+M_2+L_2+C_{12},\\
f^{-1}(\ell_{34}) &= M_6+M_8+L_3+C_{13},\\
f^{-1}(\ell_{56}) &= M_9+M_{12}+L_4+C_{14}.
\endaligned
$$

For the remaining lines
$M_i\in\bar\beta^*_1$, since $(H-L)\cdot M_i = 1$,
the images
$$
\alignedat3
\ell_1 &= f(M_3), \quad  &\ell_2 &= f(M_4),\quad &\ell_3 &= f(M_5), \\
\ell_4 &= f(M_7), \  &\ell_5 &= f(M_{10}),\ &\ell_6 &= f(M_{11})
\endalignedat
$$
are lines. The quartic $Q_{1i}$ (see \autoref{rem.quartics}) is mapped to the same line
as~$M_i$.
Since
all $\beta$-lines are skew, none of these lines $\ell_k$
passes through any of the
nodes
$p_1,\ldots,p_6$. Each line
splitting under the cover,
 cuts out an even divisor $2d$ on the branch
curve~$B$.
In other words, it is a \emph{tritangent} (or \emph{contact line}) to~$B$.

\medskip
Now, let us look at the image of $L$ under the projection.
Since $(H-L)\cdot L = 3$, the image of $L$ is a singular
irreducible cubic $K$.

We have $f^{-1}(K) \in |3H-3L|$;
since
$(3H-4L)\cdot M_i =-1$
for $M_i\in\beta^*_1$, this cubic
$K$ passes through all
nodes $p_1,\ldots,p_6$ and
$$f^{-1}(K) = M_1+M_2+M_6+M_8+M_{9}+M_{12}+L+L'\in |3H-3L|,$$
where
$H\cdot L'=8$
and $L^{\prime2} = -2$.
Since $L$ is the image of the line $L_1$,  following the classical terminology, we say that
the node of $K$ is
\emph{apparent},
i.e., it is resolved under the
double cover. The cubic is tangent to the branch curve $B$ at three
smooth points
(which may collide),
so that $L$ intersects $L'$ at the pre-images of these three points and at
the two points corresponding to the branches at the node of $K$.

\begin{remark}\label{rem.6model}
As is well known, the condition that a double cover
$\pi\colon X\to \bbP^2$
branched along a nodal
sextic curve $B$  has a quartic birational model with an extra
node is the existence of a contact conic that passes through the nodes
of the sextic. The proper transform of the conic
under the double cover splits into the union of two curves $C_1+C_2$ and the linear system
$|\pi^*\calO_{\bbP^2}(1)+C_1|$ maps $X$ to a quartic surface blowing down $C_1$ to a node.

Along these lines, the condition that $X$ admits a birational sextic model with a line is the existence
of a contact cubic $K$ with an apparent node; this model is smooth if and only if $K$ passes through all nodes of~$B$.
The proper transform
of $K$ splits into
the union $L+L'$ of smooth rational curves
and the linear system $|\pi^*\calO_{\bbP^2}(1)+L|$ maps $X$ birationally onto a
surface of degree 6; the image of $L$ is a line. The ramification curve $\bar{B}$ of the cover belongs to
$|L+L'|$.
\end{remark}

\section{The Picard lattice}\label{s.NS}
Let $\Pic(X)$ be the Picard lattice of a general Humbert sextic K3 surface.
In this section, we
show that the 24 lines $L_i,M_j$ generate a primitive sublattice
$S$ of rank $15$. We give a $\bbZ$-basis of $S$ and
compute
the discriminant
quadratic form
of $S$, upon which we conclude that $S=\Pic(X)$.

\subsection{Hyperbolic bipartite graphs}\label{s.graphs}
Given a graph~$\Gamma$ with $n$ vertices, we denote by $\Z\Gamma$ the quadratic lattice
of rank $n$ with the Gram matrix $G  = A-2\mathbb{I}_n$, where $A$ is the adjacency matrix of
$\Gamma$ and $\mathbb{I}_n$ is the identity matrix
of size $n$. For a lattice~$L$, we let
$$\rad L: = L^\perp$$
be the radical of $L$, the kernel of the map $\Z\Gamma \to (\Z\Gamma)^\vee$ defined by the Gram matrix.
We often abbreviate $L/\!\rad L$ to $L/\!\rad$.

We take for $\Gamma$ the bipartite graph with
the bipartition $(\alpha,\beta)$ and
the adjacency
relation
defined by \autoref{table}.

Let
$$ S: = \Z\Gamma/\!\rad.$$
 With the geometric applications in mind, we refer to the vertices
of~$\Gamma$ as lines.
We reiterate that $S$ readily contains the ``$6$-polarization'' $H=\sum q$, where $q$
is any of the $(3,3)$-fragments (see \autoref{rem.3,3.property}).

\begin{proposition}\label{prop.basis}
The lattice $S$ is of rank 15 and freely generated by the lines
\beq\label{basis}
(L_2, L_3, L_4, L_5, L_6, L_7, L_8, L_{11}, M_1, M_2, M_3, M_6, M_8, M_9, M_{12}).
\eeq
 The discriminant group $S^\vee/S$ of
$S$ is isomorphic
to $(\Z/2\bbZ)^{\oplus 4}\oplus \Z/16\Z$.
\end{proposition}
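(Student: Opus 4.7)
My plan is to reduce the statement to an explicit computation with the $24\times 24$ Gram matrix $G=A-2\mathbb{I}_{24}$ of $\bbZ\Gamma$, where $A$ is the adjacency matrix recorded in \autoref{table}.

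\emph{Step 1 (upper bound on the rank).} For each of the sixteen $(3,3)$-fragments $q$ from \autoref{s.lines} and \autoref{rem.3,3.property}, set $\sigma_q := \sum_{N\in q} N \in \bbZ\Gamma$. Using $N\cdot N=-2$ together with the $(3,3)$-property, one computes $\sigma_q\cdot N'=1$ for every vertex $N'$ of $\Gamma$: if $N'\in q$ the contribution is $-2+3=1$ since $N'$ meets exactly the three opposite-family lines of $q$ inside the $K(3,3)$, and if $N'\notin q$ the contribution is $1$ by the last sentence of \autoref{rem.3,3.property}. Consequently every difference $\sigma_q-\sigma_{q'}$ lies in $\rad\bbZ\Gamma$. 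I would extract nine linearly independent relations from the sixteen vectors $\sigma_q$, thereby proving $\rank S\le 15$.

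\emph{Step 2 (the basis).} Let $G_0$ denote the $15\times 15$ Gram submatrix of $G$ indexed by the classes in~\eqref{basis}. A direct determinant calculation should yield $|\det G_0|=256\ne 0$, so these classes are linearly independent in $S$; combined with Step~1 this forces $\rank S=15$ and identifies $\rad\bbZ\Gamma$ with the span produced above. To verify that the fifteen classes generate $S$ over $\bbZ$, for each of the nine remaining lines
\[
v\in\{L_1,L_9,L_{10},L_{12},M_4,M_5,M_7,M_{10},M_{11}\}
\]
solve the linear system $G_0\mathbf{c}_v=\mathbf{b}_v$, where $\mathbf{b}_v$ lists the intersection numbers of $v$ with the fifteen basis vectors (read from \autoref{table}). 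If each solution $\mathbf{c}_v$ comes out integral, the chosen classes indeed generate $S$.

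\emph{Step 3 (discriminant group).} The order $|S^\vee/S|=|\det G_0|=256=2^4\cdot 16$ is already fixed by Step~2. The isomorphism $S^\vee/S\cong(\bbZ/2)^{\oplus 4}\oplus\bbZ/16$ reduces to showing that the elementary divisor sequence of $G_0$ is exactly $(1,\ldots,1,2,2,2,2,16)$, which is read off from the Smith normal form of $G_0$ (equivalently, from gcds of successive minors).

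The obstacle is pragmatic rather than conceptual: the three steps together amount to one determinant, a $15\times 15$ integer linear system solved nine times, and one Smith normal form, all on matrices derived from \autoref{table}. I would carry them out in \texttt{GAP}, parallel to the automorphism computation already performed in~\autoref{obs.group}. The most delicate point is the integrality check in Step~2: a fractional coefficient would force a saturation and a fresh choice of basis, so the specific $15$-tuple in~\eqref{basis} is presumably dictated precisely by the requirement that integrality holds, and this must be verified rather than taken for granted.
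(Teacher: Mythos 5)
Your proposal is correct and is essentially the paper's own argument: both reduce everything to integer linear algebra on the Gram matrix read off from \autoref{table} (a Smith normal form computation plus a rank/determinant check on the $15\times15$ submatrix, carried out in \texttt{GAP}). The only organizational difference is that you obtain the upper bound $\rank S\le 15$ from the explicit radical vectors $\sum q'-\sum q''$ attached to the $(3,3)$-fragments and confirm generation via integrality of the coordinates of the nine remaining lines, whereas the paper takes the Smith normal form of the full $24\times24$ Gram matrix and matches the discriminant group of the $15$-line sublattice with that of $S$; these are interchangeable routine verifications on the same data.
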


\begin{proof} The Gram matrix $G$ of $\Z \Gamma$ can be written in the form
$$
\bmatrix-2\mathbb{I}_{12}&P\\
P&-2\mathbb{I}_{12}\endbmatrix,
$$
where $P$ is
the incidence matrix from \autoref{table}.  We can compute its integral Smith
normal form
to check the assertions about the rank and the discriminant group. We also compute the intersection matrix
of the sublattice spanned by the lines from \eqref{basis} and check that its
rank equals 15
and its discriminant group coincides with that of $S$.
\end{proof}

\begin{remark}\label{rem.radical}
It is easily seen that
$\rad(\Z\Gamma+\Z H)$ is generated, over~$\Z$,
by the classes of the form $H-\sum q$ (see \autoref{rem.3,3.property}),
where $q$ is one of the $16$ $(3,3)$ fragments in~$\Gamma$. Furthermore, one
can find a free basis for $\rad\Z\Gamma$
consisting of vectors of the form $\sum q'-\sum q''$.
\end{remark}

\begin{remark}\label{rem.others}
By brute force (starting with a discrete graph on $12$ vertices and adding
$12$ more vertices one-by-one), up to isomorphism there are but six bipartite
graphs~$\Gamma_i$ of type $(12_6,12_6)$  that define a hyperbolic lattice
$S_i$;
we let $S_1=S$ as above. It is remarkable that all these lattices are of rank 15,
admit a vector $h\in S_i\otimes \Q$ such that $h\cdot v = 1$ for each vertex of the graph,
and have $h^2=6$. If $S_i\ne S$,
the graph violates the $(3,3)$-property of \autoref{rem.3,3.property} and,
hence, cannot be realized as the full graph of lines on a smooth sextic K3
surface. For example, see $L_1,L_3,M_1,M_3,M_4$ in \autoref{fig.Gamma2}.

In fact, it can be shown that only the graphs $\Gamma_1 = \Gamma$ and $\Gamma_2$ defined by the
intersection matrix
in \autoref{fig.Gamma2}
can be realized as a part of the graph of lines on a K3 surface,
smooth or singular, of degree $6$ (if $i = 1$) or degree $4$ (if $i = 1,2$).
In all three cases,
a general surface
is smooth and $\Gamma_i$ is
its full graph of lines.
\begin{figure}[htb]
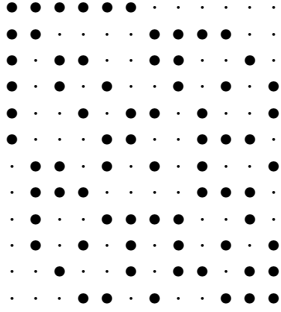

$$
\tconfig
 1 1 1 1 1 1 . . . . . . \cr
 1 1 . . . . 1 1 1 1 . . \cr
 1 . 1 1 . . 1 1 . . 1 . \cr
 1 . 1 . 1 . . 1 . 1 . 1 \cr
 1 . . 1 . 1 1 . 1 . . 1 \cr
 1 . . . 1 1 . . 1 1 1 . \cr
 . 1 1 . 1 . 1 . 1 . . 1 \cr
 . 1 1 1 . . . . 1 1 1 . \cr
 . 1 . . 1 1 1 1 . . 1 . \cr
 . 1 . 1 . 1 . 1 . 1 . 1 \cr
 . . 1 . . 1 . 1 1 . 1 1 \cr
 . . . 1 1 . 1 . . 1 1 1 \cr
\endtconfig
$$
\caption{The graph $\Gamma_2$}\label{fig.Gamma2}
\end{figure}
\end{remark}

\subsection{The Picard lattice of~$X$}\label{s.Picard}
We
have established that the Picard lattice $\Pic(X)$ of $X$
contains a sublattice $S$ of rank 15 spanned by the 24 lines.
The following proposition computes the discriminant form
$q\colon\discr(S)\to \bbQ/2\bbZ$ on the discriminant group
$\discr(S) = S^\vee\!/S \cong
(\bbZ/2\bbZ)^{\oplus 4}\oplus \bbZ/16\bbZ$.

\begin{proposition}\label{prop.discr}
The discriminant quadratic form $q$ on $\discr(S)$
is defined by the following Gram matrix:
$$
\bmatrix0&\frac12\\\frac12&0\endbmatrix\oplus
\bmatrix0&\frac12\\\frac12&0\endbmatrix\oplus\bmatrix{\frac{3}{16}}\endbmatrix,
$$
where, as usual, the diagonal entries are considered defined $\bmod\,2\Z$
whereas the others are defined $\bmod\,\Z$.

\end{proposition}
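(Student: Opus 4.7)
The proof is a direct computation from the Gram matrix $M$ of $S$ on the basis \eqref{basis} fixed in \autoref{prop.basis}. First, I would form $M$ as the $15\times 15$ integer matrix with diagonal entries $-2$ and off-diagonal entry at $(i,j)$ equal to the intersection number of the $i$-th and $j$-th basis vectors, read off from \autoref{table} (using $L_i\cdot L_j=M_i\cdot M_j=0$ for $i\ne j$). The Smith normal form computation already used in the proof of \autoref{prop.basis} provides a unimodular transformation from which one extracts explicit vectors $v_1,\ldots,v_5\in S^\vee\subset S\otimes\bbQ$ whose images generate the cyclic summands of $\discr(S)\cong(\bbZ/2)^{\oplus 4}\oplus\bbZ/16$.

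Next, since the Gram matrix of $S^\vee$ in the dual basis is $M^{-1}$, one computes the pairings $v_i\cdot v_j$ as rational numbers and reduces $v_i\cdot v_i\bmod 2\bbZ$ and $v_i\cdot v_j\bmod\bbZ$ for $i\ne j$ to obtain the matrix of $q$ on the chosen generators. To match the normal form asserted by the proposition, one then performs a $\bbZ$-linear change of generators inside $\discr(S)$: on the $(\bbZ/2)^{\oplus 4}$ subgroup there are only two isomorphism classes of non-degenerate finite quadratic forms, distinguished by the Arf invariant, so the hyperbolic case claimed in the proposition is detected by a single $\bbZ/2$-valued check; on the $\bbZ/16$ factor, the isometry class is determined by the value of $q$ on a generator up to multiplication by odd squares modulo $16$, so the claim reduces to verifying $q(\mathrm{generator})\equiv 3/16\bmod 2\bbZ$ for some suitable choice.

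The main practical obstacle is this last orthogonalization step: the vectors $v_1,\ldots,v_5$ produced by Smith normal form are typically not orthogonal across the primary splitting $(\bbZ/2)^{\oplus 4}\perp\bbZ/16$, so the order-$16$ generator usually has to be corrected by adding suitable order-$2$ elements before the asserted block decomposition becomes visible. Apart from this bookkeeping, the procedure is entirely mechanical and most cleanly carried out by computer algebra, in the same spirit as \autoref{prop.basis}.
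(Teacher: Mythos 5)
Your proposal is correct in substance and follows the same basic route as the paper --- a direct computation with the Gram matrix of $S$ in the basis \eqref{basis} --- but the paper short-circuits your normalization step. Instead of extracting generators from the Smith normal form and then orthogonalizing, the authors simply exhibit five explicit rational vectors $\gamma_1,\ldots,\gamma_5\in S^\vee$ (listed by their coordinates in the basis \eqref{basis}) whose Gram matrix is \emph{already} the asserted block form, and then observe that, since that matrix is nondegenerate, the $\gamma_i$ generate modulo $S$ a subgroup of order $256=\left|\discr(S)\right|$, hence all of $\discr(S)$. This verification of an ansatz avoids any appeal to the classification of finite quadratic forms. That matters because the one imprecise point in your outline is exactly there: nondegenerate finite quadratic forms on $(\Z/2\Z)^{\oplus4}$ with values in $\bbQ/2\Z$ are \emph{not} classified by the Arf invariant alone --- an order-$2$ element may have $q$-value $\pm\frac12$, so there are odd classes beyond the two even ones $u\oplus u$ and $u\oplus v$; your shortcut is valid only after checking that the form restricted to a suitable complement of the $\Z/16\Z$ summand is even (takes values in $\Z/2\Z$). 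Likewise the correction of the order-$16$ generator by $2$-torsion elements, which you rightly anticipate, is needed before the block splitting appears. None of this is a fatal gap --- the whole computation is finite and mechanical either way --- but the paper's ``guess and verify'' formulation is both shorter and logically self-contained.
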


\begin{proof}
It is straightforward that, in the basis \eqref{basis}
introduced in \autoref{prop.basis},
the five vectors
$\gamma_1,\ldots,\gamma_5\in S\otimes\Q$ given by
$$
\setbox0=\hbox{$00$}
\vcenter{\openup2pt\halign{\hss$#=[\ $&&\hbox to\wd0{\hss$#$\hss}\,\ \cr
 2\gamma_1& 0& 0& 1& 1& 1& 1& 0& 0& 0& 0& 0& 0& 0& 0& 0&\omit$],$\hss\cr
 2\gamma_2& 0& 0& 0& 0& 0& 0& 0& 0& 1& 1& 0& 1& 1& 0& 0&\omit$],$\hss\cr
 2\gamma_3& 0& 0& 0& 0& 0& 0& 0& 1& 0& 1& 0& 0& 1& 1& 0&\omit$],$\hss\cr
 2\gamma_4& 1& 1& 1& 1& 0& 0& 0& 1& 1& 1& 1& 0& 0& 0& 0&\omit$],$\hss\cr
16\gamma_5& 2& 2& 4& 2& 10& 12& 2& 1& 1& 7& 1& 2& 12& 12& 10&\omit$]$\hss
\crcr}}
$$
belong to $S^\vee$. The Gram matrix of these vectors is as in the statement and, since
the matrix is nondegenerate, modulo $S$ they generate a group of the
correct size $256=\left|\discr(S)\right|$.
\end{proof}

\begin{theorem}\label{trans} Let  $X$ be a Humbert sextic K3 surface with
the Picard number $15$. Then,
$\Pic(X) \cong S$ and the transcendental lattice $T(X)$ is isomorphic to
$$
\def\-{\phantom-}
T\cong \bmatrix -2& \-1& \-0\\ \-1& -2& -1\\ \-0& -1& -6\endbmatrix
 \oplus\bmatrix0&2\\2&0\endbmatrix\oplus\bmatrix0&2\\2&0\endbmatrix.
$$
\end{theorem}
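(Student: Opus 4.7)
The plan is to split the proof into a purely lattice-theoretic calculation plus an integrality step, then invoke Nikulin's uniqueness theorem.

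First I would verify from the displayed Gram matrix that the candidate lattice $T$ has signature $(2,5)$ and discriminant form $-q_S$. The leading principal minors of the $3\times 3$ block $A$ are $-2$, $3$, and $-16$, alternating in sign, so $A$ is negative definite; each $U(2)$-block is a hyperbolic plane of signature $(1,1)$, giving total signature $(2,5)$. Inverting $A$, one reads off $\discr A \cong \bbZ/16\bbZ$ with quadratic form $-\tfrac{3}{16}\pmod{2\bbZ}$ on a suitable generator, while each $U(2)$-block contributes the hyperbolic form $\bigl(\begin{smallmatrix} 0 & 1/2 \\ 1/2 & 0 \end{smallmatrix}\bigr)$ on $(\bbZ/2\bbZ)^{\oplus 2}$. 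Comparison with \autoref{prop.discr} yields the equality $q_T = -q_S$.

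Next I would appeal to Nikulin's uniqueness theorem: an indefinite even lattice $L$ with $\rk L \ge l(\discr L)+2$ is determined up to isometry by its signature and its discriminant form. Here $\rk T = 7$ and $l(\discr T)=5$, so the hypothesis holds. Since $\Pic(X)$ is primitively embedded in the unimodular K3 lattice $\Lambda_{K3}=U^{\oplus 3}\oplus E_8(-1)^{\oplus 2}$, the transcendental lattice $T(X)=\Pic(X)^\perp$ has signature $(2,5)$ and $q_{T(X)}=-q_{\Pic(X)}$; it will thus follow that $T(X)\cong T$ as soon as we know $\Pic(X)=S$.

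The main obstacle is establishing the equality $\Pic(X) = S$: the inclusion $S \subseteq \Pic(X)$ from \autoref{prop.basis} need not be an equality, since a priori $\Pic(X)/S$ could be any isotropic subgroup of $\discr S=(\bbZ/2\bbZ)^{\oplus 4}\oplus \bbZ/16\bbZ$. My approach would be to exhibit one concrete Humbert sextic $X_0$ for which $\Pic(X_0)=S$ can be certified directly, for instance by producing an elliptic pencil built from two disjoint proper quadrangles (see \autoref{rem.conics} and the relation~\eqref{mod4}) whose singular fibers and sections are already visible among the $24$ lines, $9$ conics, and $72$ quartics enumerated in \autoref{s.lines}, and then running a Shioda--Tate computation. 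By semicontinuity of the Picard rank and the specialization injection on Picard lattices, a general Humbert sextic $X$ then satisfies $\Pic(X)\hookrightarrow \Pic(X_0)=S$, which together with $S\subseteq \Pic(X)$ forces equality. Alternatively, one could enumerate the finitely many isotropic subgroups of $\discr S$ and eliminate each using Nikulin's gluing criterion for primitive embeddings into $\Lambda_{K3}$, but an explicit $X_0$ appears more transparent.
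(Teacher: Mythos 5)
The second half of your argument --- checking that the displayed matrix has signature $(2,5)$ and discriminant form $-q_S$, and then invoking Nikulin's uniqueness theorem for an indefinite even lattice with $\rk T=7\ge l(\discr T)+2$ --- is essentially identical to what the paper does, and is correct.

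The gap is in the step you yourself flag as the main obstacle, namely $\Pic(X)=S$, and neither of your proposed routes closes it. First, a Shioda--Tate computation on a concrete $X_0$ only computes the rank of the sublattice generated by the \emph{visible} curves (fiber components and sections); it re-proves the inclusion $S\subseteq\Pic(X_0)$ but gives no upper bound on $\Pic(X_0)$, so it cannot certify equality --- ruling out extra classes is precisely the hard part. Moreover, even granting $\Pic(X_0)=S$, the specialization argument controls only a very general member, whereas the theorem is asserted for \emph{every} Humbert sextic of Picard number $15$. Second, the alternative of eliminating isotropic subgroups of $\discr S$ via Nikulin's criterion for primitive embeddings into the K3 lattice fails for a different reason: the relevant finite overlattices of $S$ \emph{do} embed primitively into the K3 lattice (they are realized geometrically by singular sextic models), so there is no purely arithmetic obstruction to kill them.

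The paper's argument for this step is short and geometric. A proper finite overlattice $\Pic(X)\supsetneq S$ corresponds to a nontrivial isotropic subgroup of $\discr S$, which must contain an isotropic element of order $2$; up to the action of $\Sym(\Gamma)$ there are exactly two such elements, represented by the rational vectors $\delta_1=\tfrac12(M_6-M_8+M_9-M_{12})$ and $\delta_2=\tfrac12(L_5-L_6+L_7-L_8)$, each with $\delta_i^2=-2$ and $\delta_i\cdot H=0$. If $\delta_i$ were integral, Riemann--Roch would make $\pm\delta_i$ an effective $(-2)$-class orthogonal to $H$, hence contracted by $|H|$, contradicting the smoothness of $X\subset\bbP^4$ (equivalently, the ampleness of $H$). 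This disposes of all overlattices at once, for every smooth $X$ with $\rho=15$, with no need for a reference member, a specialization argument, or an embedding computation.
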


\begin{proof} By the assumption, $\Pic(X)$ contains $S$ as a sublattice of finite index.
The overlattices of $S$
of finite index correspond to isotropic subgroups of
$\discr(S)$. A non-trivial isotropic subgroup would contain an
element of order~$2$ and, up to the action of
the group $\Sym(\Gamma)$
(see \autoref{obs.group}), there are but two such
elements, namely, $\gamma_1$ and $8\gamma_5$. They are represented by the rational vectors
$$
\delta_1=\tfrac12(M_6-M_8+M_9-M_{12}),\quad
\delta_2=\tfrac12(L_5-L_6+L_7-L_8),
$$
with $\delta_i^2=-2$ and $\delta_i\cdot H=0$. If $\delta_i\in\Pic(X)$, then
$\pm\delta_i$ would be represented by a smooth rational curve contracted by
$|H|$
and $X$ would be singular.

It is immediate that $\discr(T) =-\discr(S)$ and the  signature $\sigma(T)=-3$,
i.e.,
$T$ is in the genus of $T(X)$.
Due to \cite[Theorem 1.14.2]{Nikulin}, this particular genus
consists of a single isomorphism class; hence, $T(X)\cong T$.
\end{proof}

\begin{corollary}\label{cor.uniqueness}
Let
$Y$ be a K3 surface and $\phi\colon\Z\Gamma/\!\rad\to\Pic(Y)$ an isomorphism.
Then $Y$ is isomorphic to a Humbert sextic~$X$ so that the classes of lines
in $\Pic(Y)$ are the images of the vertices of $\Gamma$.
\end{corollary}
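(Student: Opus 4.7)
The plan is to invoke the Global Torelli theorem for K3 surfaces, together with the surjectivity of the period map on the family of Humbert sextics.

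First, the argument in the proof of \autoref{trans} applies verbatim to $(Y,\phi)$: since $\Pic(Y)\cong S$, the transcendental lattice $T(Y)$ has signature $(2,5)$ and discriminant form $-\discr(S)$, hence belongs to the genus of the lattice $T$ of \autoref{trans}. By Nikulin's Theorem 1.14.2 this genus contains a single isomorphism class, so fixing an isomorphism $T(Y)\cong T$ and combining with $\phi$ produces a lattice isometry $\Psi\colon H^2(Y,\Z)\to H^2(X_0,\Z)$, where $X_0$ is any reference Humbert sextic with Picard number $15$.

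Next, I upgrade $\Psi$ to a Hodge isometry by choosing a Humbert sextic $X$ whose period matches that of $Y$ under~$\Psi$. The family of Humbert sextics is parameterized by hyperplanes $\plane\in\bbP^4$, see \eqref{eq.HS}, and has dimension $5$, equal to the dimension of the period domain $D_T/\Theta$ for the arithmetic group $\Theta$ acting on $D_T=\{[\omega]\in\bbP(T\otimes\bbC):\omega^2=0,\ \omega\bar\omega>0\}$ that preserves the gluing with~$S$. Local Torelli implies that the period map is dominant; a properness/extension argument leveraging the projective model $\bbP^4$ then upgrades dominance to surjectivity modulo the action of~$\Theta$. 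Thus, after composing $\Psi$ with a suitable element of $\Theta$, one obtains an $X$ such that $\Psi$ carries the period of $Y$ to that of~$X$, making $\Psi$ a Hodge isometry.

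Finally, I apply Global Torelli. Under $\Psi$ the class $H_Y:=\phi(h)\in\Pic(Y)$ corresponds to the hyperplane class of~$X$, which is ample; it remains to show that $H_Y$ is ample on $Y$ as well. Since $H_Y^2=6$ and $H_Y\cdot\phi(v)=1$ for every vertex $v$ of $\Gamma$, the class $H_Y$ lies in the positive cone. The computation in the proof of \autoref{trans} identifies the only isotropic elements of order~$2$ in $\discr(S)$ up to the action of $\Sym(\Gamma)$ and shows that neither is represented by a $(-2)$-class in $S$ itself; this rules out any $(-2)$-class of $\Pic(Y)$ orthogonal to $H_Y$, so $H_Y$ is ample. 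Global Torelli then yields an isomorphism $Y\cong X$ under which the classes $\phi(v)$, $v\in\Gamma$, go to the classes of the $24$ lines on~$X$.

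The main obstacle is the period-matching step: showing that every period in the relevant component of $D_T/\Theta$ is realized by a Humbert sextic, and not merely generically so. Once this surjectivity is in place, the ample-cone verification in the final step is routine from the lattice computations already established in \autoref{s.NS}.
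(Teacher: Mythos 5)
Your strategy --- a hands-on Torelli argument combined with surjectivity of the period map of the Humbert family --- is genuinely different from the paper's, which simply reduces the corollary to the irreducibility of the moduli space of lattice $S$-polarized K3 surfaces and cites \cite[Proposition 5.6]{DolgachevK3}, the hypothesis there being satisfied because $T\supset U(2)\oplus U(2)$ contains an admissible $2$-isotropic vector. Unfortunately, the step that carries all the content in your version is exactly the one you do not prove, and the sketch you offer for it does not work. The Humbert sextics are parameterized by the \emph{open} subset of $\check{\bbP}{}^5$ of hyperplanes transversal to $\frakG$; there is no properness to leverage, and local Torelli only shows that the image of the period map is a dense constructible subset of the $5$-dimensional period space. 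Its complement is a proper Zariski-closed subset, but nothing forces that complement to be contained in the (countable union of) Noether--Lefschetz divisors; a surface $Y$ with $\Pic(Y)=S$ exactly could a priori lie in it. As written, your argument therefore establishes the statement only for a \emph{very general} $S$-polarized $Y$, not for every $Y$ with $\Pic(Y)\cong\Z\Gamma/\!\rad$. Closing this gap is precisely what the appeal to the irreducibility of the lattice-polarized moduli space (together with the standard identification of that moduli space with the punctured period domain) accomplishes, so the missing step cannot be waved away as a technicality.

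A secondary slip: your ampleness verification conflates two different computations. The argument in the proof of \autoref{trans} rules out the two order-$2$ isotropic elements of $\discr(S)$ in order to show that $S$ admits no proper finite-index overlattice inside $\Pic(Y)$; it says nothing about $(-2)$-vectors \emph{of $S$ itself} orthogonal to $H$. To place $\phi(H)$ in the ample cone (and then to invoke very ampleness, which you also need if the images of the vertices of $\Gamma$ are to be honest lines in $\Cp4$), one must check separately --- a routine but finite lattice computation --- that $S$ contains no $v$ with $v^2=-2$, $v\cdot H=0$ and no class violating the criteria of \cite{Saint-Donat}, and then, if necessary, correct $\phi$ by an element of the Weyl group and $\pm\id$.
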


\begin{proof}
It suffices to show that the moduli space of lattice $S$ polarized K3
surfaces is irreducible. By \cite[Proposition 5.6]{DolgachevK3}, the latter
follows from the fact that $T(Y)$ contains an admissible $2$-isotropic
vector.
\end{proof}

With \autoref{rem.others} taken into account, we have also the following
restatement.

\begin{corollary}\label{cor.uniqueness.sextic}
Let
$X\subset\Cp4$ be a smooth sextic K3 surface with a subgraph of type
$(12_6,12_6)$ in its configuration of lines.
Then $X$ is projectively isomorphic to a member of the Humbert family.
\end{corollary}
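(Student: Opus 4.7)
The plan is to reduce to \autoref{cor.uniqueness} by exhibiting an isometric embedding $S\hookrightarrow\Pic(X)$ that sends the distinguished class $H\in S$ to the hyperplane class $H_X$, and then using the irreducibility of the moduli of $S$-polarized K3 surfaces to match $X$ with a Humbert sextic.

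First, I would pin down the combinatorial type of the subgraph. By \autoref{rem.others}, among the six bipartite $(12_6,12_6)$ graphs whose associated lattice is hyperbolic, only $\Gamma_1=\Gamma$ can arise as a subgraph of the line graph of a smooth sextic K3 surface. Hence the given subgraph on~$X$ is isomorphic to~$\Gamma$, and sending each vertex to the class of the corresponding line defines a lattice homomorphism $\phi\colon\Z\Gamma\to\Pic(X)$.

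Next, I would show that the image of $\phi$ is a copy of $S$ containing $H_X$. For each $(3,3)$-fragment $q$ of~$\Gamma$, its six lines form a $(3,3)$-configuration of lines on~$X$; applying \autoref{rem.3,3.property} to any five of them exhibits the sixth as the unique completion and yields $H_X=\sum q$ in $\Pic(X)$. In particular, $H_X\in\phi(\Z\Gamma)$. The inclusion $\ker\phi\subseteq\rad(\Z\Gamma)$ is automatic, while any $v\in\rad(\Z\Gamma)$ produces a class $\phi(v)$ of square $0$ orthogonal to every line and hence to $H_X$; since $H_X$ is ample on~$X$, the Hodge index theorem forces $\phi(v)=0$. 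Thus $\phi$ factors through an isometric embedding $S=\Z\Gamma/\rad\hookrightarrow\Pic(X)$ sending $H\mapsto H_X$, so that $X$ becomes a lattice $S$-polarized K3 surface.

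Finally, I appeal to the irreducibility of the moduli of $S$-polarized K3 surfaces, established in the proof of \autoref{cor.uniqueness} via \cite[Proposition 5.6]{DolgachevK3}. The Humbert family~\eqref{eq.HS} is a five-dimensional family of $S$-polarized K3 surfaces whose generic member has $\Pic=S$ by \autoref{trans}, so it dominates this five-dimensional moduli space. Surjectivity of the period map for lattice-polarized K3 surfaces then provides an $S$-polarization-preserving isomorphism of~$X$ with some Humbert sextic~$X'$, and the matching of polarizations $H_X\leftrightarrow H_{X'}$ upgrades this to a projective isomorphism via $|H_X|$ and $|H_{X'}|$. The main obstacle is this final step for non-generic~$X$, where $\Pic(X)$ can strictly contain~$S$: one must verify that the Humbert family covers, not merely dominates, the whole moduli of $S$-polarized K3 surfaces; this will rest on properness of the appropriate period map combined with the transitivity of the $G$-action on the parameter space~\eqref{eq.HS}.
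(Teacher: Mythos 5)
Your proposal follows essentially the same route as the paper: \autoref{rem.others} pins the subgraph down to $\Gamma$, the $(3,3)$-property of \autoref{rem.3,3.property} together with the Hodge index theorem yields an embedding $S=\Z\Gamma/\!\rad\hookrightarrow\Pic(X)$ carrying $H$ to the hyperplane class, and \autoref{cor.uniqueness} (irreducibility of the moduli of $S$-polarized K3 surfaces via \cite{DolgachevK3}) concludes. The ``covers versus dominates'' obstacle you flag at the end is a fair point, but it is equally left implicit in the paper, whose proof of \autoref{cor.uniqueness} simply asserts that irreducibility of the moduli space suffices (the unstated standard argument being that the locus of Humbert sextics is both open and closed there), so your write-up matches the paper's level of rigor and is, if anything, more explicit about the remaining step.
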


\begin{remark}\label{HumbertDesmic} The original construction of a Humbert sextic
surface depends on 5 parameters:
the choice of a hyperplane section of the
Humbert cubic complex $\mathfrak{G}$.
This agrees with the fact that the Picard number of a general Humbert
sextic equals $20-5=15$.

The sublattice $U(2)\oplus U(2)$
(the last two summands in~$T$)
contains a
primitive sublattice  $U(2)\oplus \la 4\ra$ isomorphic to the
transcendental lattice of a minimal resolution of the Kummer surface
of the self-product of an elliptic curve. As we know, the latter is birationally isomorphic to a Desmic quartic surface. Thus, we expect that the closure of the family of Humbert
sextic surfaces
contains a one-parameter family of sextic surfaces (probably singular) birational to desmic quartic surfaces. Unfortunately,
we were not able to find this family explicitly.
\end{remark}

\subsection{An alternative construction}\label{s.from.cubic}
A cubic hypersurface $V_3^n$ in $\bbP^n$ that
has an ordinary node $P= (1:0:\cdots:0)$ is given by an equation
$$x_0f_2(x_1,\ldots,x_n)+f_3(x_1,\ldots,x_n) = 0,$$
where $f_2, f_3$ are
some
homogeneous polynomials of degree 2, 3, respectively.
The variety  $\cone_P(V_3^n): =V(f_2,f_3)\subset\Cp{n-1}$
is called the \emph{associated variety} of
$(V_3^n,P)$;
it is a complete intersection of degrees $(2,3)$
in $\bbP^{n-1}$
parametrizing the lines in $V_3^n$ through $P$.
Conversely, every
degree~$(2,3)$ complete intersection $Y\subset\Cp{n-1}$ gives rise to
a cubic hypersurface
$V_3^n$ with a distinguished double point ~$P$: it is
the image of
$\bbP^{n-1}$ under the rational map $\bbP^{n-1}\dasharrow \bbP^n$
given by the
linear system of cubic hypersurfaces in $\bbP^{n-1}$ containing
$Y$. If $Y$ has but $N$ nodes as
singularities and the quadric $V(f_2)$ is nonsingular, then
 $\cubic(Y)$
has $N+1$ nodes, the extra node~$P$ being
the image of the exceptional divisor of the blow-up of
$Y$ in $\bbP^{n-1}$.

It was shown in \cite{DolgKondo} that the Humbert cubic complex is projectively isomorphic to the
associated variety of the Segre cubic hypersurface in $\bbP^6$ given by the equation
$$\sum_{i=0}^6x_i^3-\left(\sum_{i=0}^6x_i\!\right)^{\!\!3} = 0.$$
Thus, alternatively, we can define the Humbert sextic K3 surface as the transversal intersection
of the associated variety of the Segre cubic hypersurface in $\bbP^6$.

\section{Automorphisms of $X$}\label{s.aut}

In this section, we discuss the automorphism of general and some special
representatives of the Humbert family.

\subsection{Projective automorphisms}\label{s.aut.projective}
We start with asserting that a general Humbert sextic $X$ has no
automorphisms induced from $\operatorname{PGL}(5,\C)$.

\begin{lemma}\label{lem.extension}
Any projective automorphism $\sigma$ of a Humbert sextic K3 surface $X$
extends to a projective automorphism of the Humbert line complex $\frakG$.
\end{lemma}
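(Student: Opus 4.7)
The plan is to use two established facts: the symmetry group $\Sym(\Gamma)$ of the dual adjacency graph of the $24$ lines $L_i,M_j$ on $X$ has order $1152$ (\autoref{obs.group}), and the group of projective automorphisms of~$\frakG$ also has order $1152$, acting faithfully on the $24$ planes of~\eqref{eq.alpha-beta} and inducing the full $\Sym(\Gamma)$ on their incidence combinatorics by~\cite{DolgKondo}. Given a projective automorphism $\sigma$ of $X$, I would read off its permutation of the $24$ lines, lift it to the corresponding element $\tilde\sigma$ of the projective automorphism group of~$\frakG$ in $\PGL(6,\C)$, and verify that $\tilde\sigma(\plane)=\plane$ and $\tilde\sigma|_{\plane}=\sigma$.

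First, $\sigma$ permutes the $24$ $\alpha$- and $\beta$-lines: as a projective automorphism preserving $H$, it takes any line of $\plane\cong\Cp4$ lying on $X$ to a line of $\plane$ lying on $X$, and by~\autoref{s.rational} a general Humbert sextic has no other lines; special members can be handled by specialization from the generic case or via the arithmetic rigidity of the configuration $(12_6,12_6)$ in $\Pic(X)$ (\autoref{rem.3,3.property}). The induced element $\bar\sigma\in\Sym(\Gamma)$ lifts uniquely to some $\tilde\sigma\in\PGL(6,\C)$ preserving $\frakG$ and permuting the $24$ planes of~\eqref{eq.alpha-beta} in the same pattern. Since the $24$ lines of $X$ span $\plane$ and $\tilde\sigma$ permutes them among themselves, $\tilde\sigma(\plane)=\plane$.

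It remains to check equality on~$\plane$. Set $\tau:=\sigma^{-1}\circ\tilde\sigma|_{\plane}\in\PGL(5,\C)$; by construction $\tau$ preserves each of the $24$ lines set-wise, and it fixes the six intersection points of each $\alpha$-line $L_i\cong\Cp1$ with the $\beta$-lines meeting~$L_i$. A M\"obius transformation with three or more fixed points is the identity, so $\tau|_{L_i}=\id$ for every~$i$, and by symmetry $\tau|_{M_j}=\id$ for every~$j$. As the $24$ pointwise-fixed lines span~$\plane$, we conclude $\tau=\id$, whence $\sigma=\tilde\sigma|_{\plane}$ with $\tilde\sigma\in\Aut_{\mathrm{proj}}(\frakG)$, as required.

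The main obstacle is the first step: proving that $\sigma$ really does map the $24$ distinguished lines to themselves rather than to a larger configuration of lines on~$X$. For general $X$ this is immediate from~\autoref{s.rational}; in special cases a more careful argument---either a specialization from the generic case, or the combinatorial characterization of the $(12_6,12_6)$ configuration via the $(3,3)$-property---is needed. All subsequent steps are essentially bookkeeping with the two order-$1152$ groups and projective-geometric rigidity of projective transformations acting on a spanning family of lines.
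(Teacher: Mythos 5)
There is a genuine gap, and it sits exactly where the real difficulty of the lemma lies. You define $\tilde\sigma$ as the element of the finite group $\Aut(\frakG)\subset\PGL(6,\C)$ that induces the permutation $\bar\sigma$ of the $24$ planes~\eqref{eq.alpha-beta}, and then assert $\tilde\sigma(\plane)=\plane$ because ``$\tilde\sigma$ permutes the $24$ lines among themselves.'' But $\tilde\sigma$ only permutes the \emph{planes}: the line $L_i$ is the specific intersection of its plane with $\plane$, so $\tilde\sigma(L_i)$ is the intersection of the image plane with $\tilde\sigma(\plane)$, and this coincides with the corresponding line of $X$ precisely when $\tilde\sigma(\plane)=\plane$ --- the very statement you are trying to establish. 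The point is not academic: by \autoref{prop.projective} a general $\plane$ is preserved by no nontrivial element of $\Aut(\frakG)$, so for a special $X$ carrying a nontrivial $\sigma$ nothing short of an actual argument forces the abstract lift $\tilde\sigma$ to respect $\plane$. Two smaller issues: the existence (let alone uniqueness) of the lift requires surjectivity of $\Aut(\frakG)\to\Sym(\Gamma)$, which you infer from the equality of orders without checking that the action on the planes is faithful; and the claim that $\sigma$ preserves the original $24$ lines must hold for \emph{every} Humbert sextic, for which the paper invokes the concrete description of the degenerations (\autoref{rem.degenerations}) --- every extra line is a component of one of the nine conics --- rather than an unspecified specialization.

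The paper's proof runs in the opposite direction and thereby avoids the circularity. It first shows that $\sigma$ preserves $\quadric_0=\quadric\cap\plane$ (otherwise $X$ would lie in a surface of degree $4$), then extends $\sigma$ to an automorphism $\tilde\sigma$ of the full quadric $\quadric$ by Witt's extension theorem, so that $\tilde\sigma|_\plane=\sigma$ holds by construction; only afterwards does it prove $\tilde\sigma(\frakG)=\frakG$, by arranging $\tilde\sigma(\pi^+_N)=\pi^+_N$ for all $24$ lines $N$ (with a case analysis according to whether $q|_\plane$ is degenerate) and noting that $\frakG\cap\tilde\sigma(\frakG)$ would otherwise be a surface of degree $18$, too small to contain $24$ planes. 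Your closing rigidity argument (a transformation fixing six points on each of a spanning family of lines is the identity) is correct and pleasant, but it can only compare two maps that are already known to preserve $\plane$; it cannot substitute for producing an extension of $\sigma$ in the first place.
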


\begin{proof}
Recall that $\frakG$ is
cut off the quadric $\quadric$
by a cubic hypersurface.
Assume that $X=\frakG\cap\plane$ for a hyperplane~$\plane$.
Then $X$ lies in $\quadric_0=\quadric\cap\plane$ and, hence, we have
$\sigma(\quadric_0)=\quadric_0$, as
otherwise $X$ would be contained in the quartic surface
$\quadric_0\cap\sigma(\quadric_0)$.
Letting $\quadric=V(q)$ for a quadratic form~$q$, we conclude that
(an appropriately scaled lift to $\C^5$ of) $\sigma$ is an
automorphism of the restriction $q|_\plane$. By Witt's extension theorem, it extends to an
automorphism $\tilde{\sigma}$ of~$q$, so that
$\tilde{\sigma}(\quadric)=\quadric$.

By \autoref{rem.degenerations} below, $\sigma$ preserves as a set the
``original'' $24$ lines $L_1,\ldots,M_{12}$ on~$X$. Recall that each line in
the Grassmannian
$\quadric=G_1(\Cp3)$ is contained in exactly one $\alpha$-plane and exactly one
$\beta$-plane. For $N=L_1,\ldots,M_{12}$, denote by $\pi^+_N$ (resp.\
$\pi^-_N$) the plane of the same (resp.\ opposite) type $\alpha$ or~$\beta$
as~$N$ (as defined in \autoref{conv.alpha-beta}).
In other words, $\pi^+_N$ are the $24$ planes~\eqref{eq.alpha-beta}
whereas $\pi^-_N$ are some ``wrong'' planes most likely not
even contained in~$\frakG$.
Since $\sigma$ respects (i.e., simultaneously preserves or simultaneously reverses)
the type of lines (see \autoref{obs.group})
and $\tilde{\sigma}$ respects the type of planes, we have
$\tilde\sigma(\pi^+_N)=\pi^\epsilon_N$ for a
\emph{constant} $\epsilon=\epsilon(\tilde\sigma)=\pm1$.
We consider separately the following two cases.

\smallskip
\underline{Case 1}:
the restriction $q|_\plane$ is degenerate (necessarily of corank~$1$).

\smallskip
Since
the line $N$ lies in the cone $\quadric_0$ \emph{not passing through its
vertex},
exactly one of $\pi^\pm_N$ lies in
$\quadric_0\subset\plane$ and, by \autoref{rem.tangent}, it is $\pi^-_N$.
This property clearly distinguishes the two planes and we have
$\tilde\sigma(\pi^+_N)=\pi^+_N$ for each $N=L_1,\ldots,M_{12}$.
Therefore,
$\frakG\cap \tilde{\sigma}(\frakG)$ contains
the $24$ planes~\eqref{eq.alpha-beta} and
we conclude that $\tilde{\sigma}(\frakG)=\frakG$:
indeed,
otherwise $\frakG\cap \tilde{\sigma}(\frakG)$ would be a surface of
degree~$18$.

\smallskip
\underline{Case 2}:
the restriction $q|_\plane$ is non-degenerate.

\smallskip
This time, there are two extensions~$\tilde\sigma$: they differ by a
``reflection'' against~$\plane$ (a choice of sign $\pm1$ in the direction
$q$-orthogonal to $\plane$). Since the reflection itself is type reversing, a
unique
extension~$\tilde\sigma$ can be \emph{chosen}
so that $\tilde\sigma(\pi^+_N)=\pi^+_N$, upon
which the proof concludes in the same manner as in the previous case.
\end{proof}

\begin{theorem}
\label{prop.projective}
The subgroup of $\Aut(\bbP^4)$ that leaves a general
Humbert sextic $X$ invariant is trivial.
\end{theorem}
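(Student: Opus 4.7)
The plan is to deduce the theorem from \autoref{lem.extension} combined with the description of $G := \Aut_{\mathrm{proj}}(\frakG) = (\frakS_4 \times \frakS_4) \rtimes \bbZ/2\bbZ$ of order $1152$ from~\cite{DolgKondo}, the remaining task being to verify that for a generic hyperplane $\plane$ the $G$-stabilizer $G_\plane$ is trivial.

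First, suppose that $\sigma \in \PGL(5,\C)$ preserves $X = \frakG \cap \plane$. By \autoref{lem.extension} there exists $\tilde\sigma \in G$ whose restriction to $X$ coincides with $\sigma|_X$. Since $X$ is a smooth complete intersection of a quadric and a cubic in $\plane \cong \bbP^4$ (see \autoref{s.eqns}), $X$ is linearly non-degenerate in $\plane$; hence $\tilde\sigma(\plane) = \plane$ and $\tilde\sigma|_\plane = \sigma$. In particular $\tilde\sigma \in G_\plane$, and it will suffice to show that $G_\plane = \{1\}$ for a generic $\plane$, since then $\tilde\sigma=1$ forces $\sigma=1$.

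Second, the group $G$ sits faithfully in $\PGL(6,\C)$ by its definition as the projective automorphism group of the complex $\frakG \subset \bbP^5$. For every non-trivial $g \in G$, the induced action on the dual space $(\C^6)^*$ is non-scalar, so it has at least two distinct eigenvalues, and each of its eigenspaces is a proper linear subspace. Consequently the set
$$
Z_g := \{\plane \in (\bbP^5)^* : g(\plane) = \plane\}
$$
(the projectivization of the finite union of eigenspaces of $g^*$) is a proper closed subvariety of $(\bbP^5)^*$. As $G$ is finite, $Z := \bigcup_{1 \ne g \in G} Z_g$ is a proper closed subset of $(\bbP^5)^*$, and for any $\plane$ in the non-empty Zariski-open complement with $X = \frakG \cap \plane$ smooth, one has $G_\plane = \{1\}$, whence $\Aut_{\mathrm{proj}}(X)=\{1\}$.

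The bulk of the work has been absorbed into \autoref{lem.extension}: the crucial input is that every projective automorphism of $X$ is induced from the ambient finite group $G$ of projective automorphisms of $\frakG$, after which the only remaining step is a routine genericity argument in $(\bbP^5)^*$. The only minor subtlety to check is that the passage $\sigma \mapsto \tilde\sigma$ does land in the honest finite group $G$ (not merely in the automorphisms of $\frakG$ as a variety), but this is precisely the content of \autoref{lem.extension} together with the identification $G = \Aut_{\mathrm{proj}}(\frakG)$ established in~\cite{DolgKondo}.
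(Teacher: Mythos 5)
Your proposal is correct and takes essentially the same route as the paper: reduce via \autoref{lem.extension} to showing that the stabilizer of the hyperplane $\plane$ in the finite group $\Aut(\frakG)$ is trivial for generic $\plane$, which is a routine Zariski-closedness argument. The extra details you supply (linear non-degeneracy of $X$ in $\plane$ forcing $\tilde\sigma(\plane)=\plane$, and the eigenspace description of the fixed-hyperplane locus $Z_g$) are just careful elaborations of the paper's one-line genericity statement.
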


\begin{proof}
By \autoref{lem.extension},
the group of projective automorphisms
of a Humbert sextic K3 surface $X = \frakG\cap\plane$ coincides with the group $\Aut_\plane(\frakG)$
of projective automorphisms of $\frakG$ leaving the hyperplane $\plane$ invariant.
Since $\Aut(\frakG)$ is finite (see~\cite{DolgKondo} and
\autoref{s.eqns}), the points $\plane\in\check{\bbP}^5$ invariant under the
natural action of $\Aut(\frakG)$ or a non-trivial subgroup thereof constitute a proper
Zariski closed set.
\end{proof}

\begin{remark}\label{rem.discr.action}
In general, if $\rk S<20$, the group of projective automorphisms of a very
general lattice $S$-polarized K3 surface~$X$ is computed as the pull-back
$\rho^{-1}(\pm\id)$ under the natural homomorphism
$$
\rho\colon O_H(S) \to O(\discr S),
$$
where $O_H(S)\subset O(S)$ is the (finite) subgroup preserving the
polarization $H\in S$.
(If, as in \autoref{prop.projective}, $\rk S$ is odd, the somewhat vague
``very general'' can be replaced with the requirement $\Pic(X)=S$.)
Since, in our case, $S$ is generated by lines,
$O_H(S)=\Sym\Gamma$, whereupon, using \autoref{prop.discr} (and the proof thereof),
\autoref{obs.group}, and \texttt{GAP}~\cite{GAP4.13},
we arrive at $\rho^{-1}(\pm\id)=\{\id\}$.
\end{remark}

\subsection{Birational automorphisms}\label{s.aut.abstract}

In spite of \autoref{prop.projective},
we claim that
the full group $\Aut(X)$ of automorphisms of $X$ is infinite.
For example,
in
\autoref{s.elliptic} below we find quite a few
elliptic fibrations on $X$. Some
admit a section
$E$ so that the
divisor classes of $E$ and of the irreducible components of the fibers span a
positive corank sublattice of
$\Pic(X)$.
By
the Shioda--Tate formula \cite[Chapter 11, Corollary 3.4]{Huybrechts},
the Mordell--Weyl group of translation automorphisms of $X$
along the fibers of the elliptic fibration is infinite
(e.g., see \autoref{rem.MW} below).

Another
approach would be using
the classification of the Picard lattices
of the algebraic K3 surfaces with finite automorphism group
found in~\cite{Nikulin2}:
our lattice $S=\Pic(X)$ is not on the list.

Besides,
we have 24 involutions
$\tau_N\colon X\to X$ each of which is the
covering transformation of the projection $X\to \bbP^2$
from a line $N$ on $X$ (see \autoref{planemodel}).
For any pair of lines, the two covering involutions generate an infinite dihedral group.
We do not know whether these 24 involutions generate the whole group
$\Aut(X)$.

\begin{remark}
In fact,
any smooth sextic K3 surface with at least two lines has infinite group of
birational automorphisms. Furthermore, it can be shown that,
with very few exceptions, the
involutions defined by a pair of distinct lines generate an infinite dihedral
group.
Proofs and details will appear in~\cite{degt.Rams:sextics}.
\end{remark}

\subsection{Anti-symplectic involutions and cubic surfaces}\label{s.aut.example}
In principle, \autoref{lem.extension} and the description of $\Aut(\frakG)$
found in~\cite{DolgKondo} (see \autoref{s.eqns}) let us find all Humbert
sextic K3 surfaces admitting a non-trivial projective automorphism.
Below we confine ourselves to a maximal stratum with an
anti-symplectic involution. Other examples are mentioned in
\autoref{rem.degenerations} below.


\begin{example}\label{cubics}
Let $a_1 = a_2 = 1$ in \eqref{eqCC},~\eqref{eq.HS}.
Then, $X$ admits an anti-symplectic involution
$\gamma\colon x_1\leftrightarrow x_2$
with $X^\gamma$
a smooth hyperplane section $C = V(x_1-x_2)$. Let $s = x_1x_2$, $t = x_1+x_2$. Equations
\eqref{eqCC},~\eqref{eq.HS} can be rewritten in the form
\beq\label{EqCLC}
\begin{gathered}
t^2-2s+x_3^2+y_1^2+y_2^2+y_3^2 = 0,\\
sx_3+\ii y_1y_2y_3 = 0,\\
t+a_3x_3+a_4y_1+a_5y_2+a_6y_3 = 0.
\end{gathered}
\eeq
These equations define a surface in $\bbP(1^4,2)$, where we weight $s$ with degree $2$ and other coordinates with degree $1$.
Projecting from the point $(0:0:0:0:1)$,  we obtain that $Y$ is isomorphic to the cubic surface
in $\bbP^3$ given by an equation
$$
 F = \bigl((a_3x_3 + a_4y_1 + a_5y_2 + a_6y_3)^2 + x_3^2 + y_1^2 + y_2^2 + y_3^2\bigr)x_3 + 2\ii y_1y_2y_3.
$$

The
plane $V(x_3)$ is a tritangent
plane of $Y$. Recall that any line $\ell$ on the cubic surface $Y$ is contained in one of the plane sections of $Y$ containing
one of the lines $V(x_3,y_i)$. Plugging in $x_3= ky_i$ in the equation of the cubic surface, we
find that the residual conic in the plane $V(x_3-ky_i)$ is given by a symmetric $3\times 3$ matrix
 whose entries are homogeneous polynomials in $k$ (of different degrees).

  Computing the determinant of this matrix, we find that
 the parameters $k$ defining singular residual conics are the zeros of the polynomial
\beq\label{parameterk}
k(k^2+1)\bigl((a_3^2 + a_5^2+ a_6^2+1)k^2-2(\ii a_5a_6-a_4a_3)k+a_4^2+1\bigr)= 0.
\eeq
The parameter $k = 0$ corresponds to the tritangent plane $V(x_3)$ containing the residual conic equal to the union
of the two lines $\ell_j\ne \ell_1$. We choose the parameters
$k = \pm \ii$. The branch curve $B$ of the cover $X\to Y$ is cut out by the quadric
given by the equation
$$t^2-4s = (a_3x_3 + a_4y_1 + a_5y_2 + a_6y_3)^2+2(x_3^2+y_1^2+y_2^2+y_3^2) = 0.$$
 Plugging in the equation
$x_3=  \ii y_1$, we find
$$(a_3x_3 + a_4y_1 + a_5y_2 + a_6y_3)^2+2(y_2^2+y_3^2) = 0,$$
and plugging in $F = 0$, we get
$$(a_3x_3 + a_4y_1 + a_5y_2 + a_6y_3)^2+y_2^2+y_3^2+2y_2y_3 = 0.$$
Together this gives $(y_2+y_3)^2 = 0$.  This shows that each line on $Y$ contained in the plane
$V(x_3-\ii y_1)$ is tangent to the branch curve $B$. Similarly, we find that all $12$ lines in
$Y$ contained in the planes $V(x_3-\ii y_i)$ are tangent to $B$.
\end{example}

\begin{remark} The set of 24 lines in $Y$ lying in the planes $V(x_3\pm ky_i)$, where $k\ne 0$ satifies
equation \eqref{parameterk} is equal to the union of two sets of 12 lines, one of which is a double-six of lines.
By checking the intersections of the 12  lines corresponding to the parameters $k = \pm \ii$, we ontain that our
set is complementary to the double-six.
\end{remark}

\begin{remark}\label{rem.any.cubic?}
Let
$\mathcal{M}$ be the moduli space of pairs $(F,C)$, where $F$ is a smooth cubic surface and $C$ is a smooth curve
in $|{-2K_F}|$. It is a variety of dimension $13$, a projective bundle over the moduli space of smooth cubic surfaces.
Taking the double cover of $F$ branched along $C$, we obtain a finite map to the moduli space $\mathcal{M}'$ of K3 surfaces polarized by the lattice
$\mathsf{I}^{1,6}(2)\cong \Pic(F)(2)$. We find it amazing that our $5$-dimensional moduli space
of Humbert sextic K3 surfaces intersects $\mathcal{M}'$ along a four-dimensional subvariety.
We believe, but could not prove it,  that
the pre-image of this subvariety in $\mathcal{M}$ is of dimension $4$ and projected surjectively onto the moduli space of cubic surfaces.
\end{remark}

\subsection{Anti-symplectic involutions via lattices}\label{s.aut.as}
Constructed
in \autoref{cubics} is a $4$-parameter family of Humbert
sextics~$X$, each having $12$ extra conics. It follows that, for $X$ general,
the corank~$1$ space $\Pic(X)\otimes\Q$
is generated over~$S\otimes\Q$ by any of these conics.
Each conic can be regarded as a vector
$c\in\Hom(\Z\Gamma+\Z H,\Z)$; as such,
our ``symmetric'' conics have the following properties:
\begin{itemize}
\item
$c(H)=2$ and $c(N)=0$ or~$1$ for each line, i.e., vertex~$N$ of $\Gamma$;
\item
$c(N)=1$ for exactly four $\alpha$-lines and exactly four $\beta$-lines;
\item
$c$ annihilates $\rad(\Z\Gamma+\Z H)$, cf. \autoref{rem.radical}.
\end{itemize}
Apart from the nine old conics, there is a single $G$-orbit of such
vectors~$c$, and we take for one of the new conics~$C$ the one intersecting
$$
\DEFS \l5, \l8, \l10, \l12, \l18, \l19, \l21, \l23,
$$
see the first row in \autoref{fig.conics} below. (We change the indexing
from~\autoref{s.aut.example} for
a better looking matrix.) Then, we take
\beq\label{newlattice}
\bar{S}=(\Z\Gamma+\Z H+\Z C)/\!\rad
\eeq
for the new Picard lattice.
A straightforward computation in the spirit of \autoref{s.NS} and
\autoref{s.rational} below shows that
\begin{itemize}
\item
$\bar{S}$ is indeed the Picard lattice of a smooth sextic K3 surface X;
\item
no non-trivial finite index extension of $\bar{S}$ has this property; hence,
the new family constructed is indeed the one in \autoref{s.aut.example};
\item
$X$ has the $24$ old lines, $9$ old conics, and $12$ new conics.
\end{itemize}
The new conics
are depicted in \autoref{fig.conics}. The divisor classes of
six of them are shown in the figure, and others are found recursively:
given a new conic $C'$, four more are $H-C'-L_i-M_j$, where
$L_i\cdot C=M_j\cdot C=L_i\cdot M_j=1$. (A more conceptual explanation of this phenomenon is
found further in this section, where some of the conics on $\bX$
are interpreted as lines on a
cubic surface.)
Note that the
new conics appear in pairs indistinguishable by their intersection with the
lines.
They are ordered so that $\{\text{odd}\}$, $\{\text{even}\}$ constitute a
 double-six, see the bottom right corner of \autoref{fig.cubic}
below for the intersection matrix (divided by $2$).

\begin{figure}[htb]
$$
\def\\#1{&\omit\small\quad\smash{$\DEFS#1$}\hss\cr}
\let\1=1
\tconfig
 . . . . 1 . . 1 . 1 . 1 & . . . . . 1 1 . 1 . 1 . \\{C}
 . . . . 1 . . 1 . 1 . 1 & . . . . . 1 1 . 1 . 1 . \cr
 . . . . . 1 1 . 1 . 1 . & . . . . 1 . . 1 . 1 . 1 \cr
 . . . . . 1 1 . 1 . 1 . & . . . . 1 . . 1 . 1 . 1 \\{H-C-C_{\1\1}}
 1 . . 1 . . . . 1 . . 1 & . 1 1 . . . . . . 1 1 . \cr
 1 . . 1 . . . . 1 . . 1 & . 1 1 . . . . . . 1 1 . \\{H-C-L_{\12}-\l23}
 . 1 1 . . . . . . 1 1 . & 1 . . 1 . . . . 1 . . 1 \cr
 . 1 1 . . . . . . 1 1 . & 1 . . 1 . . . . 1 . . 1 \\{H-C-L_{\10}-M_9}
 1 . 1 . . 1 . 1 . . . . & . 1 . 1 1 . 1 . . . . . \cr
 1 . 1 . . 1 . 1 . . . . & . 1 . 1 1 . 1 . . . . . \\{H-C-\l8-M_7}
 . 1 . 1 1 . 1 . . . . . & 1 . 1 . . 1 . 1 . . . . \cr
 . 1 . 1 1 . 1 . . . . . & 1 . 1 . . 1 . 1 . . . . \\{H-C-\l5-M_6}
\endtconfig
$$
\caption{The twelve new conics}\label{fig.conics}
\end{figure}

\begin{remark}\label{rem.conic.families}
There are several $4$-parameter families of Humbert sextics with the same set
of lines and a few extra conics,
no longer symmetric (cf.\ also \autoref{rem.degenerations} below
concerning extra lines). However, we only consider the one described.
\end{remark}

Let $\bGamma$ be the colored graph of lines and conics on~$\bX$. Using the
\texttt{digraph} package in \texttt{GAP}~\cite{GAP4.13}, we can compute the
group $\bG=\Sym(\bGamma)$. We have $|\bG|=192$; this group acts transitively
on the set of lines and on that of the new conics, and the action of $\bG$ on
the set of old conics has two orbits, one being $\{C_{11},C_{22},C_{33}\}$.
The action of $\bG$ on the lines (which almost determines its action on
the conics) is the set-wise stabilizer of the collection
(shown in \autoref{fig.conics})
$$
\bigl\{\{N\in\Gamma\,|\,N\cdot C=1\}\bigm|\text{$C$ is a new conic}\bigr\}
$$
under $G=\Sym(\Gamma)$, see \autoref{obs.group}. The kernel of this action is
generated by an involution interchanging the two Schl\"{a}fli double-sixes, see below.

Next, we argue as explained in \autoref{rem.discr.action} and conclude that
$\Aut_H\bX=\Z/2$.
The generator $\gamma$ of this group is an anti-symplectic
projective involution of~$X$:
it acts on the lines and old conics via
$$
L_{2i}\leftrightarrow M_{2i-1},\quad
L_{2i-1}\leftrightarrow M_{2i},\quad
C_{rs}\leftrightarrow C_{sr},\ r\ne s,
$$
leaving invariant $C_{rr}$, $r=1,2,3$, and all new conics.

\begin{remark}\label{rem.other.models}
A posteriori
we can consider the involution
$\gamma_T=-r_v\colon T\to T$ (see \autoref{prop.discr}), where $r_v$ is the
reflection defined by an appropriate square $(-4)$ vector $v\in T\cap2T\dual$
(e.g., the difference of the two generators of one of the two $U(2)$-summands).
Then we take $\bT=v^\perp$ for the new transcendental lattice, so
that the new Picard lattice is an index~$2$ extension
$\bS\supset S\oplus\Z v$. In more detail this approach will be discussed
 by the first author elsewhere.

Yet another ``systematic'' way would be to analyze, in the spirit of
\autoref{rem.others}, the realizations of $\Gamma$ as the graph of lines on a
$2$-polarized K3 surface. This time we would have to allow singularities,
arriving at a single $4$-parameter family of $6$-nodal double planes.
As explained in the next paragraph,
this model is merely the cubic surface $\bX/\gamma$ with
a sextuple of disjoint lines coming from invariant conics on~$\bX$
contracted.
\end{remark}

\begin{figure}[htb]
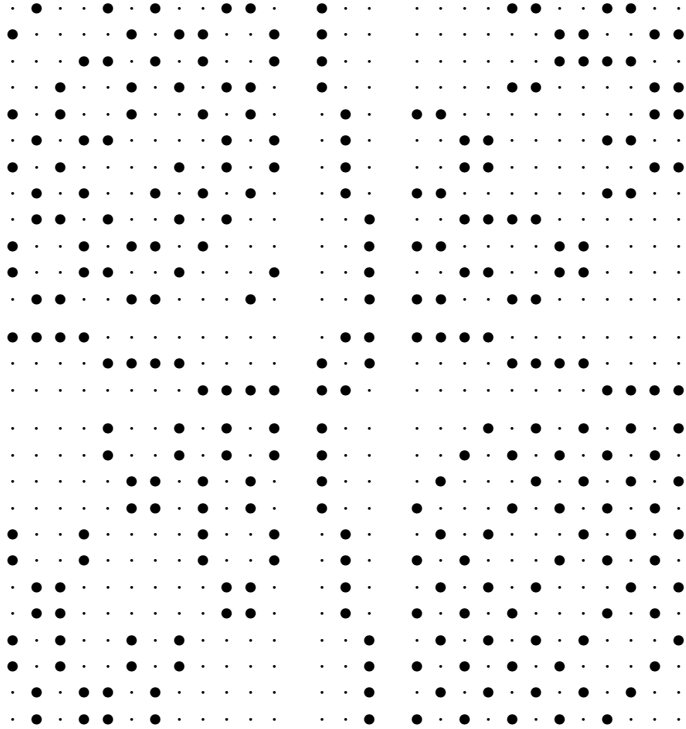

$$
\def\\{\noalign{\medskip}}
\tconfig
 . 1 . . 1 . 1 . . 1 1 . & 1 . . & . . . . 1 1 . . 1 1 . . \cr
 1 . . . . 1 . 1 1 . . 1 & 1 . . & . . . . . . 1 1 . . 1 1 \cr
 . . . 1 1 . 1 . 1 . . 1 & 1 . . & . . . . . . 1 1 1 1 . . \cr
 . . 1 . . 1 . 1 . 1 1 . & 1 . . & . . . . 1 1 . . . . 1 1 \cr
 1 . 1 . . 1 . . 1 . 1 . & . 1 . & 1 1 . . . . . . . . 1 1 \cr
 . 1 . 1 1 . . . . 1 . 1 & . 1 . & . . 1 1 . . . . 1 1 . . \cr
 1 . 1 . . . . 1 . 1 . 1 & . 1 . & . . 1 1 . . . . . . 1 1 \cr
 . 1 . 1 . . 1 . 1 . 1 . & . 1 . & 1 1 . . . . . . 1 1 . . \cr
 . 1 1 . 1 . . 1 . 1 . . & . . 1 & . . 1 1 1 1 . . . . . . \cr
 1 . . 1 . 1 1 . 1 . . . & . . 1 & 1 1 . . . . 1 1 . . . . \cr
 1 . . 1 1 . . 1 . . . 1 & . . 1 & . . 1 1 . . 1 1 . . . . \cr
 . 1 1 . . 1 1 . . . 1 . & . . 1 & 1 1 . . 1 1 . . . . . . \cr\\
 1 1 1 1 . . . . . . . . & . 1 1 & 1 1 1 1 . . . . . . . . \cr
 . . . . 1 1 1 1 . . . . & 1 . 1 & . . . . 1 1 1 1 . . . . \cr
 . . . . . . . . 1 1 1 1 & 1 1 . & . . . . . . . . 1 1 1 1 \cr\\
 . . . . 1 . . 1 . 1 . 1 & 1 . . & . . . 1 . 1 . 1 . 1 . 1 \cr
 . . . . 1 . . 1 . 1 . 1 & 1 . . & . . 1 . 1 . 1 . 1 . 1 . \cr
 . . . . . 1 1 . 1 . 1 . & 1 . . & . 1 . . . 1 . 1 . 1 . 1 \cr
 . . . . . 1 1 . 1 . 1 . & 1 . . & 1 . . . 1 . 1 . 1 . 1 . \cr
 1 . . 1 . . . . 1 . . 1 & . 1 . & . 1 . 1 . . . 1 . 1 . 1 \cr
 1 . . 1 . . . . 1 . . 1 & . 1 . & 1 . 1 . . . 1 . 1 . 1 . \cr
 . 1 1 . . . . . . 1 1 . & . 1 . & . 1 . 1 . 1 . . . 1 . 1 \cr
 . 1 1 . . . . . . 1 1 . & . 1 . & 1 . 1 . 1 . . . 1 . 1 . \cr
 1 . 1 . . 1 . 1 . . . . & . . 1 & . 1 . 1 . 1 . 1 . . . 1 \cr
 1 . 1 . . 1 . 1 . . . . & . . 1 & 1 . 1 . 1 . 1 . . . 1 . \cr
 . 1 . 1 1 . 1 . . . . . & . . 1 & . 1 . 1 . 1 . 1 . 1 . . \cr
 . 1 . 1 1 . 1 . . . . . & . . 1 & 1 . 1 . 1 . 1 . 1 . . . \cr
\endtconfig
$$
\caption{The $27$ invariant divisors}\label{fig.cubic}
\end{figure}
Thus, we have twelve invariant pairs
(split conics) $L_i+\gamma(L_i)$, $i=1,\ldots,12$,
three invariant old conics $C_{rr}$, $r=1,2,3$, and twelve invariant new
conics. The intersection matrix of these $12+3+12=27$ invariant divisors
obtained
(upon division by~$2$) is shown in \autoref{fig.cubic}, and one can readily
recognize the $27$ lines on the smooth cubic surface
$F=\bX/\gamma$.
The three lines in the middle lie in a tritangent plane, and the two
sextuples $\{2i\}$, $\{2i+1\}$, $i=8,\ldots,13$ (the even/odd ones of the last twelve)
constitute a Schl\"{a}fli double-six.
Any of these two sextuples can be blown down to obtain the double plane model
of \autoref{rem.other.models}.

\section{Back to the double plane model}\label{s.plain}
As we observed before, the family of Humbert sextic K3 surfaces depends on 5 parameters. However, their
double plane model seems
to require
$18+1+3+6 = 28$
conditions on plane sextics (depending on $27$ parameters) to obtain a
family of sextics with  6 nodes,
admitting a contact split nodal cubic,
six contact lines, and tangent to the
diagonals of the complete quadrilateral.
 Here, we solve this puzzle by proving that the splitting of the three diagonals and one line is
almost
enough
to obtain the double cover isomorphic to a Humbert sextic K3 surface.

We start with a complete quadrilateral
$\ell_{236}$, $\ell_{135}$, $\ell_{245}$, $\ell_{146}$ and a sextic curve~$B$ with
nodes at the six vertices $p_1,\ldots,p_6$. First, we require that each diagonal
$\ell_{12}, \ell_{13},\ell_{23}$
should be tangent to $B$ at some point
$q_i$
distinct from all points $p_i$.
A straightforward computation in the spirit of \autoref{s.NS} shows that there are two
families of such sextics, which differ by the proper transforms of the
diagonals: either
\begin{enumerate}
\item\label{fB.good}
the triangle of the diagonals lifts to a hexagon, as in
\autoref{fig.hex}, left, or
\item\label{fB.bad}
the triangle of the diagonals splits into two triangles, as in
\autoref{fig.hex}, right.
\end{enumerate}

\begin{figure}
\unitlength1.5pt
\def\w{\circle{4}}
\def\ww{\circle{6}}
\def\b{\circle*{4}}
\def\hhex{%
\put(2,9){\line(2,-1){16}}
\put(38,9){\line(-2,-1){16}}
\put(0,12){\line(0,1){16}}
\put(40,12){\line(0,1){16}}
\put(2,31){\line(2,1){16}}
\put(38,31){\line(-2,1){16}}
\put(20,2){\line(0,1){36}}
}
\def\hex{%
\hhex
\put(2,11){\line(2,1){36}}
\put(38,11){\line(-2,1){36}}
}
\def\lb#1{\put(-15,18){\eqref{#1}:}}
\begin{picture}(40,40)(0,0)
\hex
\put(20,0)\w
\put(0,10)\w
\put(40,10)\w
\put(20,40)\b
\put(0,30)\b
\put(40,30)\b
\put(0,10)\ww
\put(40,10)\ww
\put(20,40)\ww
\lb{fBl.6}
\end{picture}
\qquad\qquad
\begin{picture}(40,40)(0,0)
\hex
\put(20,0)\w
\put(0,10)\b
\put(40,10)\b
\put(20,40)\b
\put(0,30)\w
\put(40,30)\w
\lb{fBl.2}
\end{picture}
\qquad\qquad\qquad
\begin{picture}(40,40)(0,0)
\hhex
\put(2,10){\line(2,0){36}}
\put(2,30){\line(2,0){36}}
\put(20,0)\w
\put(0,10)\b
\put(40,10)\b
\put(20,40)\b
\put(0,30)\w
\put(40,30)\w
\lb{fBl.4}
\end{picture}
\caption{The intersection of $M_3'$ with the pre-images of the diagonals}\label{fig.hex}
\end{figure}

\begin{remark}\label{rem.resolution}
The two families can be described geometrically.
Let $f\colon Y\to \bbP^2$ be the minimal resolution of singularities
of the double cover branched along $B$.
We factor $f$ as $ Y\to F'\to \bbP^2$,
where $F'$ is the weak del Pezzo surface of degree $3$ obtained by blowing up
the points
$p_1,\ldots,p_6$. The anti-canonical model of $F'$ is obtained by blowing down the proper transforms of
the diagonals $\ell_{ij}$. It is isomorphic to the $4$-nodal cubic surface $F$.
The proper transforms
of the diagonals can be identified with the sum $T$ of three lines $m_1,m_2,m_3$ cut out by a tritangent plane $\Pi$ of $F$. The proper transform $B'$ of $B$
is mapped to the intersection of $F$ with a quadric surface not passing through the nodes of $F$.
Counting constants, we find that there are two families
of quadrics, both of dimension $6$, intersecting
each line $m_i$ at one point with multiplicity two. In each family, one is required to impose three conditions on quadrics.

One family is defined by the condition
that a quadric $Q$ intersects the tritangent plane along a conic tangent to the lines $m_i$.
The other
family is defined by
the condition that $Q$ is tangent to the tritangent plane along a line $\ell$.
In the latter case, the pencil of planes containing $\ell$ defines a pencil of cubic curves on $F$. It is equal to the proper
transform of the pencil of cubic curves in the plane with base points $p_1,\ldots,p_6$ and the points of tangency
of $B$ with the diagonals.

The triangle $T$ of lines on the cubic surface $F$ is a reducible curve of arithmetic genus one.
Its pre-image in the double cover is a double cover branched along the Cartier divisor
$D = 2(q_1+q_2+q_3)\in |\calO_T(2)|$, where
$Q\cap T = \{q_1,q_2,q_c\}$.
Thus, we have two cases, resulting in two families of sextic curves~$B$.

\smallskip
\underline{Case \eqref{fB.good}}:
$Q\cap \Pi$ is a smooth conic tangent to $T$ at $q_1,q_2,q_3$.

\smallskip
The double cover is not trivial, i.e., it is not equal to the union of two curves mapped isomorphically to
$T$ under the covering. This corresponds to the first two pictures in \autoref{fig.hex}.
Its restriction over the open curve $T\setminus \{q_1,q_2,q_3\}$
corresponds to a subgroup of index two of its fundamental group.

The pre-image of $T$ is a reducible curve with the dual graph shown in
\autoref{fig.hex}, left.
Here and below, the action of the deck translation is the central
symmetry.

\smallskip
\underline{Case \eqref{fB.bad}}:
$Q\cap \Pi$ is a double line.

\smallskip
The cover is trivial, i.e., it defines the trivial cover of
 $T\setminus \{q_1,q_2,q_c\}$. The dual graph of the pre-image of $T$ is given in
 \autoref{fig.hex}, right.
 \end{remark}

Since we need to be able to choose pairwise disjoint pull-backs of the
diagonals, we concentrate on Case~\eqref{fB.good}, where
the triangle of the diagonas
does not split under the cover, i.e.,
the proper transform of the union of the diagonals is a hexagon (with the
three long diagonals, cf.\ \autoref{fig.hex}, left)
of $(-2)$-curves on $Y$.

Let
$M_1', M_2', M_6', M_8'$, $M_9', M_{12}' \subset Y$
be the proper transforms
of the
exceptional curves of the blow-up,
choose three disjoint sides $L_2', L_3', L_4'$
of the hexagon
(e.g., the circled vertices in \autoref{fig.hex}, left), and
let $L_5',L'_{11};L_6',L'_9; L_7',L'_{10}; L_8',L'_{12}$ be the
$(-2)$-curves that are mapped, in pairs, by
 the double cover $f\colon Y\to\Cp2$ to
the four sides of the
quadrilateral. They are all $(-2)$-curves on $Y$.

Now, we invoke one more condition that there exists a
tritangent
line $\ell$ of $B$ not passing through its nodes.
The
tritangent $\ell$ splits under the double cover into the union
of two $(-2)$-curves intersecting at three points;
denote them by $M_3'$, $M_3''$.
It can be shown that there are two irreducible families of pairs $(B,\ell)$, both
depending on $5$ parameters: either
\begin{enumerate}
\setcounter{enumi}2
  \item \label{fBl.6}
  $B$ is as in Case~\eqref{fB.good} and has six tritangents, or
  \item \label{fBl.2}
  $B$ is as in Case~\eqref{fB.good} and has two tritangents.
\end{enumerate}
In terms of $\ell$ itself only, the two families
differ by the intersection of $M_3'$ with the sides of the hexagon,
see the black vertices in \autoref{fig.hex}, left and center
(whereas the other pull-back $M_3''$ is represented
by the white vertices); in Case~\eqref{fBl.6}, the other pairs are
obtained by rotation.

Thus, we have
fifteen $(-2)$-curves
$$
M_1', M_2',  M_6', M_8', M_9', M_{12}', L_5', L_6',L_7',L_8',
L_2',L_3',L_4',L_{11}', M_3'.
$$
Here, we choose one full pair $L_5',L_{11}'$ while keeping only one
chosen line from the other three pairs.
Now, once the pairwise intersections of $M'_3,L_2',L_3',L_4'$ have been
arranged,
it is immediate to check that,
\emph{under the appropriate choice of
the components $(L_5',L'_{11}), L_6',L_7',L_8'$},
the intersection matrix of these curves coincides with
that of the curves constituting the basis~\eqref{basis}
on a general Humbert sextic K3 surface $X$. (Indeed, since the pull-backs of
the four sides of the quadrilateral
are disjoint from each other and from those of the diagonals,
we merely index them according to \autoref{table}.)
Therefore, these curves
span a lattice isomorphic to
$\Pic(X)$,
and it remains to apply \autoref{cor.uniqueness}.

\begin{remark}\label{rem.contact.cubics}
For completeness, in Case~\eqref{fB.bad}
there is a single $5$-parameter family of pairs
$(B,\ell)$:
\begin{enumerate}
\setcounter{enumi}4
  \item \label{fBl.4}
  $B$ is as in Case \eqref{fB.bad} and has four tritangents.
\end{enumerate}
A pull-back~$M_3'$ of~$\ell$ intersects those of the diagonals
as shown in \autoref{fig.hex}, right.

Whereas the existence of~\eqref{fBl.6} is guaranteed by
\autoref{prop.basis}, the existence of~\eqref{fBl.2} and~\eqref{fBl.4}, as
well as the very fact that there are but three families needs proof, which
will appear elsewhere.
It is also worth mentioning that, in all five cases
\eqref{fB.good}--\eqref{fBl.4}, i.e.,
$B$ itself or a pair
$(B,\ell)$,
a general curve~$B$ has quite a few contact cubics
with an apparent node. However, only in Case~\eqref{fBl.6} there is a (unique)
such cubic passing through all six nodes of~$B$, so that the corresponding
sextic K3 surface $X\subset\Cp4$ is smooth (see \autoref{rem.6model}).
\end{remark}


\begin{remark}\label{rem.deg=4}
One can compare the specialty of the plane sextic $B$ with respect to the tangency conditions with
another plane sextic curve, known as the \emph{Humbert plane sextic} of genus 5. It has five cusps
and is tangent to any
line connecting
a pair of cusps as well as the unique conic passing through the
cusps.
The double cover of $\bbP^2$ branched along the Humbert sextic is birationally isomorphic to the Kummer quartic surface
associated with a nonsingular curve of genus 2 \cite[Remark 8.6.9]{CAG}. As in our case, the 16 tangency condition would wrongly
imply that such a curve
should not exist.

\end{remark}

\section{Elliptic pencils}\label{S.elliptic}

We keep the notation $\Gamma=\Gamma_1$, $G=\Sym(\Gamma)$,
$S=\Z\Gamma/\!\rad$,
etc.\ from \autoref{s.Picard}, and we consider a Humbert sextic~$X$ that is
general in the sense that $\Pic(X)=S$.

\subsection{Rational curves}\label{s.rational}
To find smooth rational curves on a polarized K3 surface,
we use the well known description of the nef cone
and Vinberg's algorithm~\cite{Vinberg:polyhedron}
for computing the fundamental polyhedra.
As a step, the vectors of a given square in a
definite lattice are found by the Lenstra--Lenstra--Lov\'{a}sz lattice basis reduction
algorithm, which is implemented as \texttt{ShortestVectors} in
\texttt{GAP}~\cite{GAP4.13}. One can also use the algorithm from \cite{Shimada}.

We find that on~$X$ there are
\begin{itemize}
  \item $24$ lines (a single $G$-orbit),
  \item $9$ conics (a single $G$-orbit),
  \item no twisted cubics, and
  \item $72$ rational quartics (also a single $G$-orbit).
\end{itemize}
Thus, all lines are those constituting
the original configuration~$\Gamma$ and all conics and quartics
are those described in \autoref{rem.conics} and \autoref{rem.quartics},
respectively;
we use the notation $C$, $Q$ and the indexing introduced therein.

Taking this two steps further, we find that there are
\begin{itemize}
  \item $816 = 48+192+576$ (three $G$-orbits) rational quintics and
  \item $720 = 144+288+288 $ (also three $G$-orbits) rational sextics,
\end{itemize}
so that it hardly makes sense to study these or higher degree curves in
detail.

\begin{remark}\label{rem.degenerations}
We
emphasize that these counts, as well as the smoothness of
the conics and quartics hold for a general member of the family only.
In fact,
as long as lines are concerned,
it is the nine conics $C_{rs}$ in \autoref{rem.conics}
that solely control the smooth degenerations of
Humbert sextics. Beyond the $24$ original lines, any other line is a
component of one of these conics. Furthermore, the strata with
a fixed intersection graph of lines are labeled by
the
sets of conics that split
(or rather the $(\SG3\times\SG3)\rtimes\Z/2$ orbits
thereof, see \autoref{obs.group}).
There are eight strata
$$
\def\1{\rlap{$\scriptstyle_{12}$}}
\def\2{\rlap{$\scriptstyle^2$}}
\def\one{\{1\}}
\def\group#1{\noalign{\smallskip}\multispan4\strut
 \hss\!\!$\scriptstyle#1$\!\!\hss\cr}
\def\rule{\vrule depth4pt}
\rule\
\tconfig
 1 . . \cr
 . . . \cr
 . . . \cr
\group\one
\endtconfig
\ \rule\,\rule\
\tconfig
 1 1 . \cr
 . . . \cr
 . . . \cr
\group\one
\endtconfig
\ \rule\
\tconfig
 1 . . \cr
 . 1 . \cr
 . . . \cr
\group{\Z/2}
\endtconfig
\ \rule\,\rule\
\tconfig
 1 1 . \cr
 1 . . \cr
 . . . \cr
\group{\Z/2}
\endtconfig
\ \rule\
\tconfig
 1 . . \cr
 . 1 . \cr
 . . 1 \cr
\group{\SG3}
\endtconfig
\ \rule\
\tconfig
 1 1 . \cr
 . . 1 \cr
 . . 1 \cr
\group{(\Z/2)\2}
\endtconfig
\ \rule\,\rule\
\tconfig
 1 1 . \cr
 1 1 . \cr
 . . . \cr
\group{(\Z/2)\2}
\endtconfig
\ \rule\
\tconfig
 1 1 . \cr
 1 . 1 \cr
 . 1 1 \cr
\group{\mathfrak{D}\1}
\endtconfig
\ \rule\,,
$$
itemized by the Picard rank $\rho=16,17,18,19$. (Here, the grid represents
the $(4\times4)$-cells in \autoref{table} that index the conics, and
the $\bullet$'s stand for the
conics that split. Distinct values of $\rho$ are separated by $\|$'s.
For each stratum, we indicate the group of projective automorphisms
of a general representative,
cf.\ \autoref{prop.projective} and \autoref{s.aut.as};
it is computed as explained in \autoref{rem.discr.action}.
Symplectic automorphisms preserve $\alpha$ and $\beta$
whereas anti-symplectic ones
interchange $\alpha\leftrightarrow\beta$.)
Proof will appear in~\cite{degt.Rams:sextics}.
Degenerate Humbert sextics have many more conics, twisted cubics,
and quartics;
still, only the original nine conics may split, at most six at
a time.

\end{remark}


\subsection{Elliptic pencils with a reducible singular fiber}\label{s.elliptic}
We are mostly interested in the elliptic pencils on~$X$ with at least one
reducible singular fiber made of lines. Such fibers are induced subgraphs
of~$\Gamma$ isomorphic to an affine Dynkin diagram.

All such subgraphs are listed below, where, for each type, we indicate the
total number of subgraphs followed by that itemized by the $G$-orbits. Marked
with a $^*$ are (orbits of) pencils admitting a section,
which can always be chosen a line.
\begin{itemize}
\item
$\tA_3$: $\DEFS 162 = 18\'' + 144\''$ (two orbits);
\item
$\tA_5$: $\DEFS 1056 = 192\'' + 288\'' + 576\''$ (three orbits);
\item
$\tA_7$: $\DEFS 1512 = 72 + 144\'' + 144 + 288\'' + 288 + 576\''$ (six orbits);
\item
$\tA_{11}$: $\DEFS 48$ (one orbit);
\item
$\tD_4$: $\DEFS 360 = 72 + 288\''$ (two orbits);
\item
$\tD_5$: $\DEFS  720 = 144\'' + 576\''$ (two orbits);
\item
$\tD_6$: $\DEFS 5184 = 2\*144 + 3\*288 + 4\*576\'' + 576 + 1152\''$ ($11$ orbits);
\item
$\tD_8$: $\DEFS 1440 = 288 + 2\*576$ (three orbits);
\item
$\tE_6$: $\DEFS 3840 = 2\*96 + 192\'' + 4\*576\'' + 2\*576$ (nine orbits);
\item
$\tE_7$: $\DEFS 12672 = 3\*576\'' + 7\*576 + 4\*1152\'' + 2\*1152$ ($16$ orbits);
\item
$\tE_8$: $\DEFS 4608 = 4\*1152$ (four orbits).
\end{itemize}
We analyze but a few interesting cases, leaving the rest to the reader.
Note that the list of smooth rational curves of degree up to $6$ lets us
detect reducible fibers in the pencils of fiber degree up to $12$, i.e., all except
$\tE_7$ or $\tE_8$.

The $\tA_3$-type subgraphs are the quadrangles discussed in
\autoref{s.lines}. If $q$ is a proper quadrangle,
apart from~$q$ the pencil $\pencil(q)$ has
three reducible fibers: another proper quadrangle~$q'$ and two
$\tA_1$-type fibers made of two conics each. (We use the Dynkin diagram
notation since homologically we cannot detect the degenerations
$\mathrm{I}_2\to\mathrm{II}$ or $\mathrm{I}_3\to\mathrm{III}$; most
likely they do occur in some special members of the family.) The two quadrangles
$q$, $q'$ constitute a single cell $(\qa_r,\qb_s)$, see \autoref{s.lines},
and the four conics are $C_{uv}$, $u\ne r$, $v\ne s$, see
\autoref{rem.conics}\eqref{index.qu}.

\begin{remark}\label{rem.MW}
In particular, the Mordell--Weil group of a pencil $\pencil(q)$ has
rank $5$, proving that $\Aut(X)$ is infinite, cf.\ the discussion in
\autoref{s.aut.abstract}.
\end{remark}

If $q$ is an improper quadrangle, the pencil
$\pencil(q)$ has two more reducible fibers:
another improper quadrangle~$q'$ and an $\tA_2$-type fiber made of a
conic~$C$
and two lines $L_i$, $M_j$.
Thus, we also have an involution $q\leftrightarrow q'$ on the set of improper
quadrangles; it does \emph{not} preserve $(3,3)$-fragments.
The pencil is determined by the pair $L_i$,
$M_j$ of intersecting lines:
the conic is $C=C_{rs}$, where $\qa_r\ni L_i$, $\qb_s\ni M_j$, and $q$, $q'$
are the complementary quadrangles in the two $(3,3)$-fragments sharing
$(L_i,M_j)$ as a common corner.

Representatives of the three orbits of the type ~$\tA_5$ (hexagonal) fibers
and the other reducible fibers of the corresponding pencils are as follows:
\begin{itemize}
\item
$\DEFS\tA_5=(\l1, \l13, \l2, \l17, \l4, \l21)$:
 $\DEFS( \l3, \c5_2 )+( \l11, \c5_1 )$;
\item
$\DEFS\tA_5=(\l1, \l13, \l2, \l17, \l5, \l18)$:
 $\DEFS( \l7, \c5_3 ) + ( \l15, \c5_3 ) + ( C_{33}, Q_{62} )$;
\item
$\DEFS\tA_5=(\l1, \l13, \l2, \l17, \l5, \l24)$:
 $\DEFS( \l3, \l15, Q_{12,2} ) + ( \l7, \c5_2 )$,
\end{itemize}
where $(\cdot)$ is an $\mathrm{I}_*$-type fiber and $\DEFS\c{d}_n$ is a
certain representative of the $n$-th orbit of degree~$d$ curves.
In particular, we have a ``natural''
expression of the quintics in terms of lines. (All  sextics
appear in the singular fibers of octagonal pencils.)

An example of a longest cycle $\tA_{11}$ is
$$
\DEFS
\l1, \l13, \l6, \l17, \l10, \l22, \l3, \l16, \l7, \l19, \l11, \l24,
$$
and the corresponding pencil has no other reducible fibers.
The divisor class of a fiber lies in $4\cdot S\dual$, and all lines and conics
are $4$-fold sections.

The $\tD_4$-type subgraphs constituting the shorter, $72$-element orbit are
described as follows. Pick a quartet $\qa_s$ and a $\beta$-line $M_j$ (or
vice versa, with $\alpha$ and $\beta$ reversed).
Each quartet $\qa_u$, $u\ne s$, has two lines that intersect~$M_j$ and,
together with~$M_j$, the four lines obtained constitute the $\tD_4$ in
question. For example,
$\DEFS  \l1, \l2, \l6, \l8, \l13 $ starting from $(\qa_3,M_1)$.
The elliptic pencil has two other reducible fibers:
\begin{itemize}
  \item the complementary $\tD_4$-fragment
  $\DEFS \l3, \l4, \l5, \l7, \l16$, so that all eight $\alpha$-lines
  constitute $\alpha\smallsetminus\alpha_s$, and
  \item an $\mathrm{I}_4$-type fiber $L_{10},C_{32},L_{11},C_{33}$.
\end{itemize}
In the latter, in the invariant terms,
the two conics are $C_{sv}$, $\qb_v\not\ni M_j$,
and the two lines are
those from~$\qa_s$ that are disjoint from~$M_j$.

Certainly, one can construct numerically effective
isotropic classes by combining smooth
rational curves of higher degrees. Some of this combinations appear above
(most notably, pairs of conics), but the list is far from complete.

\end{document}